\title{Computation of Generalized Derivatives for Abs-Smooth Functions by Backward
    Mode Algorithmic Differentiation and Implications to Deep Learning
}
\author{Lukas Baumgärtner\,\orcidlink{0000-0003-1007-4815} \and Franz Bethke\,\orcidlink{0000-0002-0553-4242} }
\ifluatex \usepackage{fontspec}
\else \usepackage[utf8]{inputenc}
\pgfplotsset{compat=1.18}
\newcommand{\externalizeifable}{\makeatletter
    \ifnum\pdf@shellescape=1\relax \tikzexternalize[prefix=externalized/]\else \relax \fi \makeatother
}
\newcommand{\grad}{\nabla}
\newcommand{\jac}{\mathrm{D}}
\newcommand{\clarke}[1]{\partial_C#1}
\newcommand{\bouligand}[1]{\partial_B#1}
\NewDocumentCommand{\goldstein}{ O{\varepsilon} m }{\partial_{#1}#2}
\renewcommand{\d}[1]{\,\mathrm{d}#1}
\newcommand{\dd}[1]{\frac{\d{}}{\d{#1}}\,}
\newcommand{\ols}[1]{\mskip.5\thinmuskip\overline{\mskip-.5\thinmuskip {#1} \mskip-.5\thinmuskip}\mskip.5\thinmuskip} \newcommand{\olsi}[1]{\,\overline{\!{#1}}} \makeatletter
\newcommand\closure[1]{
  \tctestifnum{\count@stringtoks{#1}>1} {\ols{#1}} {\olsi{#1}} }
\long\def\count@stringtoks#1{\tc@earg\count@toks{\string#1}}
\long\def\count@toks#1{\the\numexpr-1\count@@toks#1.\tc@endcnt}
\long\def\count@@toks#1#2\tc@endcnt{+1\tc@ifempty{#2}{\relax}{\count@@toks#2\tc@endcnt}}
\def\tc@ifempty#1{\tc@testxifx{\expandafter\relax\detokenize{#1}\relax}}
\long\def\tc@earg#1#2{\expandafter#1\expandafter{#2}}
\long\def\tctestifnum#1{\tctestifcon{\ifnum#1\relax}}
\long\def\tctestifcon#1{#1\expandafter\tc@exfirst\else\expandafter\tc@exsecond\fi}
\long\def\tc@testxifx{\tc@earg\tctestifx}
\long\def\tctestifx#1{\tctestifcon{\ifx#1}}
\long\def\tc@exfirst#1#2{#1}
\long\def\tc@exsecond#1#2{#2}
\DeclareMathOperator{\conv}{conv}
\DeclareMathOperator{\diag}{diag}
\DeclareMathOperator{\sign}{sign}
\newcommand{\ball}[2]{B_{#1}\ifblank{#2}{}{(#2)}
}
\newcommand{\field}[1]{\mathbb{#1}}
\newcommand{\nats}{\field{N}}
\newcommand{\reals}{\field{R}}
\newcommand{\Union}{\bigcup}
\newcommand{\downto}{\searrow}
\newcommand{\from}{\colon}
\newcommand{\oneto}[1]{\lbrack#1\rbrack}
\newcommand{\with}{\mid}
\newcommand{\pinv}{\dagger}
\DeclarePairedDelimiter{\paren}{\lparen}{\rparen}
\DeclarePairedDelimiter{\set}{\lbrace}{\rbrace}
\DeclarePairedDelimiter{\rlclosed}{\lbrack}{\rbrack}
\DeclarePairedDelimiterXPP{\abs}[1]
    {}
    {\lvert}{\rvert}
    {}
    {\ifblank{#1}{\bullet}{#1}}
\DeclarePairedDelimiterXPP{\norm}[1]
    {}
    {\lVert}{\rVert}
    {}
    {\ifblank{#1}{\bullet}{#1}}
\DeclarePairedDelimiterXPP{\seq}[2]
    {}
    {\lparen}{\rparen}
    {_{#2}}
    {#1}
\DeclarePairedDelimiterXPP{\family}[2]
    {}
    {\lparen}{\rparen}
    {_{#2}}
    {#1}
\DeclarePairedDelimiterXPP{\dprod}[2]
    {}
    {\langle}{\rangle}
    {}
    {\ifblank{#1}{\bullet}{#1}, \ifblank{#2}{\bullet}{#2}}
\DeclarePairedDelimiterXPP{\inner}[2]
    {}
    {\langle}{\rangle}
    {}
    {\ifblank{#1}{\bullet}{#1}, \ifblank{#2}{\bullet}{#2}}
    \newtheorem{globally}{DUMMY ENVIRONMENT FOR GLOBALLY NUMBERED ENVIRONMENTS}
    \newtheorem{corollary}[globally]{Corollary}
    \newtheorem{lemma}[globally]{Lemma}
    \newtheorem{theorem}[globally]{Theorem}
    \theoremstyle{definition}
    \newtheorem{definition}[globally]{Definition}
    \newtheorem{example}[globally]{Example}
\renewcommand{\oneto}[1]{\set{1,\ldots,#1}}
\newcommand{\fixed}[1]{\mathring{#1}}
\newcommand\logeq{\;\;\mathrel{\vcentcolon\Longleftrightarrow}\;\;}
\newcommand{\succmore}[1]{\mathcal{D}(#1)}
\NewDocumentCommand{\Cabs}{ O{2} }{C_{\operatorname{abs}}^{#1}}
\newcommand{\relu}{\operatorname{relu}}
\renewcommand{\bouligand}[1]{\partial_L#1}
\begin{document}

\maketitle

\begin{abstract}
    Algorithmic differentiation (AD) tools allow to obtain gradient information
    of a continuously differentiable objective function in a computationally
    cheap way using the so-called backward mode.
    It is common practice to use the same tools even in the absence of
    differentiability, although the resulting vectors may not be generalized
    gradients in the sense of Clarke.

    The paper at hand focuses on objectives in which the non-differentiability
    arises solely from the evaluation of the absolute value function.
    In that case, an algebraic condition based on the evaluation procedure of
    the objective is identified, that guarantees that Clarke gradients are
    correctly computed without requiring any modifications of the AD tool in
    question.
    The analysis allows to prove that any standard AD tool is adequate to
    drive a stochastic generalized gradient descent method for training a dense
    neural network with ReLU activations.
    The same is true for generalized batch gradients or the full generalized
    gradient, provided that the AD tool makes a deterministic and agnostic
    choice for the derivative information of the absolute value at \(0\).
\end{abstract}

\section{Introduction}
Non-smooth optimization problems appear in a wide range of applications.
Consequently, the analysis and implementation of suitable algorithms has been a
focal point in the mathematical optimization research.
In particular, in recent years the interest has risen due to the popularity of
non-smooth models in machine learning.
Many optimization algorithms require one or more generalized gradients at every
iterate.
The simplest example for such an algorithm, is the generalized (sub)gradient
descent method introduced in the 1960s, see~\cite[Chapter 2]{Sho85}.
More adaptions of smooth algorithms have been developed since, e.g.,
generalizations of the conjugate gradients method in~\cite{Wol75} and, more
recently, the non-smooth BFGS method~\cite{LO13}.
Other popular approaches are Shor's r-Algorithm~\cite{Sho70} as well as bundle
methods, see~\cite[Chapter 12]{BKM14}.
The book by Bagriov, Karmitsa and Mäkelä~\cite{BKM14} gives an excellent
overview over the basic algorithmic ideas to solve non-smooth optimization
problems.

While in the smooth case, a gradient oracle can be realized by means of
backward mode algorithmic differentiation (AD), see~\cite{GW08}, this is in
general not possible because chain-rule fails for generalized gradients.
To overcome this Khan and Barton in~\cite{KB12} proposed to use the
lexicographic differentiation from Nesterov~\cite{Nes05} in a forward mode
algorithmic differentiation approach.
They exploit chain-rule for (generalized) directional derivatives, however at
the price of giving up the cheap gradient principle of the backward mode.
For the abs-smooth functions, i.e., functions where non-smoothness arises
only due to absolute values, Griewank in~\cite{Gri13} has enhanced this to a
hybrid forward-backward approach which usually allows to obtain generalized
gradients in a much cheaper way.
ADOL-C~\cite{GJU96} is, to the best of the authors' knowledge, the only AD
framework able to perform this so-called polynomial escape.

Other AD tools pragmatically apply chain-rule for non-smooth functions,
using elements of the Clarke subdifferential at non-smooth evaluations.
This yields of course (generalized) gradients at smooth points, i.e., if no
non-smoothness was encountered in the evaluation procedure of the function.
However, in non-smooth points this might fail to compute generalized gradients
in the sense of Clarke.
Bolte and Pauwels in~\cite{BP21} have shown that standard forward and backward
mode AD computes generalized gradients almost everywhere, i.e., the set of
points where are AD fails to compute a Clarke gradient is of measure zero.
They proceed to analyze non-smooth (stochastic) gradient descent with so-called
conservative fields as surrogates for the Clarke differential, differing only
on the aforementioned nullset.
Elements of such set-valued functions are, unlike Clarke gradients, computable
by AD.

The aim of the paper is to show that, in the abs-smooth case, the linear
independence kink qualification (LIKQ) condition of~\cite{GW16} is sufficient to
guarantee that naive algorithmic differentiation computes a correct Clarke
gradient.
The LIKQ condition is a generic property that is satisfied by a broad class of
problems, provided that they are represented by reasonable evaluation
procedures.
Under the LIKQ condition it is possible to verify optimality conditions of
abs-smooth objective functions in polynomial time, e.g., in~\cite{GW19, FWG19,
Kre23}.
Abs-smooth problems can be reformulated as MPCC problems and in that case the
LIKQ condition translates to MPCC-LICQ, see~\cite{HKS21}.

This paper is structured as follows.
In Section~\ref{sec:absnormal}, the basics on abs-smooth functions in so-called
abs-normal form is discussed, in particular, this comprises the LIKQ condition
and the connection to AD.
In Section~\ref{sec:gradients}, we establish our desired results, showing that
under LIKQ, AD computes Clarke generalized gradients.
In some cases, even limiting gradients are obtained.
Section~\ref{sec:nns} then focuses on feed forward neural networks.
The LIKQ condition is verified for the computation of stochastic gradients of
training problems.
The paper is concluded by discussing batch gradients, proofing that practical AD
tools also compute generalized gradients there.

\section{Abs-normal Form and LIKQ}\label{sec:absnormal}
Let \(n, s \in \nats\), \(f \in C^2(\reals^{n + s + s})\) and
\(c \in C^2(\reals^{n + s + s}; \reals^s)\) such that for
any pair \((x, z) \in \reals^n \times \reals^s\) the partial derivatives
\(\partial_2 c(x, \abs{z}, z) \in \reals^{s \times s}\) and
\(\partial_3 c(x, \abs{z}, z) \in \reals^{s \times s}\) are strictly lower
triangular matrices.
By this assumption,
\(I_s - \partial_2 c(x, \abs{z}, z) - \partial_3 c(x, \abs{z}, z)\)
is invertible and the \emph{switching equation}
\begin{equation}
    \label{eq:switching}
    z = c(x, \abs{z}, z)
\end{equation}
has a unique solution \(z \in \reals^s\) for every \(x \in \reals^n\).
In particular, each component of \(z\) can be explicitly computed from \(x\) and
the previous components in \(z\).
The solution operator is denoted by \(z[\cdot] \colon \reals^n \to \reals^s\),
i.e., given \(x\) the pair \((x, z[x])\) is a solution to~\eqref{eq:switching}.
The two functions \(f\) and \(c\) can then be used to represent a single
function \(\varphi \from \reals^n \to \reals\) in the following sense.
If for all \(x \in \reals^n\)
\begin{equation}
    \varphi(x) = f(x, \abs{z[x]}, z[x])
\end{equation}
the tuple \((f, c)\) is then called \emph{evaluation procedure} of \(\varphi\)
and \(\varphi\) is called \emph{abs-smooth}.
The set of abs-smooth functions, i.e., the set of functions for which there is
an evaluation procedure is denoted by \(\Cabs(\reals^n)\).

Even though \(\varphi\) is in general non-smooth, due to the involvement of the
absolute value function, many tools for algorithmic differentiation (AD) are
able to perform backward mode differentiation for \(\varphi\) at \(x \in
\reals^n\).
In doing so, the absolute value function is treated as an elementary function
and a fixed element \(\xi \in \clarke{\abs{\cdot}}(z[x]) \subseteq
\rlclosed{-1, 1}^s\) of the Clarke generalized differential, see~\cite{Cla90},
is used for the required derivative information.
Thereby, effectively a gradient of a smooth function \(\varphi_\xi \in
C^2(\reals^n)\), with \(\varphi_\xi(x) = \varphi(x)\), is computed.
In the evaluation of \(\varphi_\xi\) the term \(\abs{z[x]}\) is replaced via
\(\abs{z[x]} = \Xi z[x]\) for \(\Xi \coloneqq \diag(\xi)\) which leads to
\begin{align}
    \label{eq:fixed-switching}
    z_\xi =&\ c(x, \Xi z_\xi, z_\xi) \\
    \text{and}\qquad\varphi_{\xi}(x) \coloneqq&\ f(x, \Xi z_\xi[x], z_\xi[x]),
\end{align}
where \(z_\xi[\cdot]\) denotes the solution operator to the modified switching
equation.
Table~\ref{tab:ad-tools} lists the choices for \(\xi_i \in \clarke
\abs{\cdot}(0)\) for different popular AD tools.
The result of this strategy in general will not lead to a Clark generalized
gradient of \(\varphi\) at \(x\), i.e., \(\grad \varphi_\xi(x) \notin \clarke
\varphi(x)\) because the Clarke differential in general does not obey a chain
rule, hence the choices for the components of \(\xi_i\) can not be made
independently of each other.

\begin{table}[ht]
    \centering
    \begin{tabular}{cccc}
        \toprule
        language & tool         & version                  & \(\xi_i \in \clarke \abs{\cdot}(0)\) \\
        \midrule
        \multirow{3}{*}{python} & jax~\cite{Bra+18}        & 0.4.20 & \(1.0\)              \\
                                & tensorflow~\cite{Aba+15} & 2.15.0 & \(0.0\)              \\
                                & pytorch~\cite{Pas+19}    & 2.2.0  & \(0.0\)              \\
        \midrule
        julia                   & ReverseDiff~\cite{Rev20} & 1.15.1 & \(1.0\)              \\
        \midrule
        C                       & ADOL-C~\cite{GJU96}     & 1.10.0 & \(0.0\)              \\
        \midrule
        C++                     & CodiPack~\cite{SAG19}    & 2.2.0  & \(0.0\)              \\
        \bottomrule
    \end{tabular}
    \caption{Elements of \(\clarke \abs{\cdot}(0)\) used in backward propagation for
        different AD-tools. ADOL-C is using the \emph{naive} mode, not
        polynomial escape.}\label{tab:ad-tools}
\end{table}

Fix \(\fixed{x}\) and \(\fixed{z} \coloneqq z[\fixed{x}]\) as its solution
to~\eqref{eq:switching} and define
\begin{align}
        a &\coloneqq \grad_1 f(\fixed{x}, \abs{\fixed{z}}, \fixed{z}) \in \reals^n,
      & Z &\coloneqq \jac_1 c(\fixed{x}, \abs{\fixed{z}}, \fixed{z}) \in \reals^{n \times s},
    \\
        b &\coloneqq \grad_2 f(\fixed{x}, \abs{\fixed{z}}, \fixed{z}) \in \reals^s,
      & L &\coloneqq \jac_2 c(\fixed{x}, \abs{\fixed{z}}, \fixed{z}) \in \reals^{s \times s},
    \\
        d &\coloneqq \grad_3 f(\fixed{x}, \abs{\fixed{z}}, \fixed{z}) \in \reals^s,
      & M &\coloneqq \jac_3 c(\fixed{x}, \abs{\fixed{z}}, \fixed{z}) \in \reals^{s \times s}.
\end{align}
Further, define the \emph{signature vector} \(\fixed{\sigma} \coloneqq
\sign(\fixed{z}) \in \set{-1, 0, 1}^s\), the \emph{signature matrix}
\(\fixed{\Sigma} \coloneqq \diag(\fixed{\sigma})\) and the set of active
indices \(\fixed{\alpha} = \set{i \with \fixed{\sigma}_i = 0}\).
The projection onto the active indices is defined by \(P_{\fixed{\alpha}}
\coloneqq \paren{e^T_i}_{i \in \fixed{\alpha}}\), where \(e_i \in \reals^s\) is
the \(i\)-th unit vector.

The signature vectors in \(\set{-1, 0, 1}^s\) are partially ordered by the
precedence relation
\begin{equation}
    \sigma \succeq \fixed{\sigma}
    \logeq \sigma_i \fixed{\sigma}_i \ge \fixed{\sigma}_i^2\, \text{ for all } i \in \oneto{s}.
\end{equation}
By this definition, \(\sigma \succeq \fixed{\sigma}\) holds if and only if
\(\sigma_i = \fixed{\sigma}_i\) whenever \(\fixed{\sigma}_i \ne 0\).
The set of all definite signatures, i.e., \(\sigma \in \set{-1,1} \), with \(\sigma \succeq
\fixed{\sigma}\) is denoted by
\begin{equation}
    \succmore{\fixed{\sigma}}
    \coloneqq \set{\sigma \in \set{-1, 1}^s \with \sigma \succeq \fixed{\sigma}}.
\end{equation}

\begin{definition}[Signature domains]
	For \(\sigma \in \set{-1, 0, 1}^s\) the set
	\begin{equation}
		S_\sigma \coloneqq \set{x \in \reals^n \with \sign(z[x])=\sigma}
	\end{equation}
	is called \emph{signature domain}. If \(S_\sigma\) is open and non-empty, it is called \emph{essential}.
\end{definition}

Signature domains provide a canonical way of writing functions in
\(\Cabs[2, s](\reals^n)\) as piecewise smooth functions in the sense of Scholtes
\cite{Sch12} because \(\varphi\) is a smooth function on every \(S_\sigma\).

\begin{lemma}\label{lem:consitent-solution}
	Let \(\sigma \in \set{-1, 0, 1}^s\).
	Then,
	\begin{equation}
		S_\sigma = \set{x \in \reals^n \with \sign(z_\sigma[x]) = \sigma}.
	\end{equation}
	If \(x \in S_\sigma\), then \(z_\sigma[x] = z[x]\) and \(\varphi_{\sigma}(x) =
	\varphi(x)\).
\end{lemma}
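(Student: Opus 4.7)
The plan is to reduce both set-equalities and function-equalities to a single observation: whenever $\sign(\zeta) = \sigma$ holds for some vector $\zeta \in \reals^s$, the identity $\abs{\zeta} = \fixed{\Sigma}\zeta$ is true componentwise. Indeed, for indices with $\sigma_i \in \set{-1,1}$ this is the definition of sign, and for indices with $\sigma_i = 0$ both sides vanish. Thus under a matching sign pattern, the original switching equation~\eqref{eq:switching} and the frozen version~\eqref{eq:fixed-switching} with $\xi \coloneqq \sigma$ coincide.

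First I would prove the inclusion $S_\sigma \subseteq \set{x \with \sign(z_\sigma[x]) = \sigma}$. Fix $x \in S_\sigma$, so $\sign(z[x]) = \sigma$ and hence $\abs{z[x]} = \Sigma z[x]$. Substituting in~\eqref{eq:switching} yields $z[x] = c(x, \Sigma z[x], z[x])$, meaning that $z[x]$ solves the modified switching equation~\eqref{eq:fixed-switching}. Uniqueness of solutions (guaranteed by strict lower triangularity of $\partial_2 c$ and $\partial_3 c$, which is inherited by $\partial_2 c \cdot \Sigma + \partial_3 c$) forces $z_\sigma[x] = z[x]$, and the sign condition transfers immediately. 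The reverse inclusion is perfectly symmetric: starting from $\sign(z_\sigma[x]) = \sigma$, one gets $\abs{z_\sigma[x]} = \Sigma z_\sigma[x]$, substitutes into~\eqref{eq:fixed-switching} to recover $z_\sigma[x] = c(x, \abs{z_\sigma[x]}, z_\sigma[x])$, and uniqueness for~\eqref{eq:switching} gives $z[x] = z_\sigma[x]$.

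Once both inclusions are proved, the identity $z_\sigma[x] = z[x]$ for every $x \in S_\sigma$ is already a byproduct of the first step. The final claim $\varphi_\sigma(x) = \varphi(x)$ then follows by direct substitution: using $z_\sigma[x] = z[x]$ and $\abs{z[x]} = \Sigma z[x]$, we obtain
\begin{equation}
    \varphi_\sigma(x) = f(x, \Sigma z_\sigma[x], z_\sigma[x]) = f(x, \abs{z[x]}, z[x]) = \varphi(x).
\end{equation}

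I do not expect any serious obstacle here; the whole argument rests on the elementary identity $\abs{\zeta} = \fixed{\Sigma}\zeta$ under matching sign patterns. The only point deserving care is invoking uniqueness for the two switching equations, which requires noting that the strict lower triangularity hypothesis on $\partial_2 c$ and $\partial_3 c$ is preserved when $\partial_2 c$ is post-multiplied by the diagonal matrix $\Sigma$, so that $I_s - \Sigma \partial_2 c - \partial_3 c$ remains invertible and the componentwise explicit solvability promised after~\eqref{eq:switching} carries over to~\eqref{eq:fixed-switching}.
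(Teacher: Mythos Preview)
Your proposal is correct and follows essentially the same route as the paper: both inclusions are obtained from the observation that \(\sign(\zeta)=\sigma\) forces \(\abs{\zeta}=\Sigma\zeta\) (you write \(\fixed{\Sigma}\) in the opening paragraph, but you mean \(\Sigma=\diag(\sigma)\)), so that the two switching equations collapse into one another and uniqueness finishes the argument. The only cosmetic slip is the order of factors in your final displayed matrix: the relevant Jacobian is \(I_s-\partial_2 c\,\Sigma-\partial_3 c\) rather than \(I_s-\Sigma\,\partial_2 c-\partial_3 c\), although strict lower triangularity (and hence invertibility) survives either way.
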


\begin{proof}
	Let \(\Sigma \coloneqq \diag(\sigma)\).
	Assume \(x \in \set{x \in \reals^n \with \sign(z_\sigma[x]) = \sigma}\),
	then \(\Sigma z_\sigma[x] = \abs{z_\sigma[x]}\).
	Thereby,
	\begin{equation}
		z_\sigma[x] = c(x, \Sigma z_\sigma[x], z_\sigma[x]) = c(x,\abs{z_\sigma[x]}, z_\sigma[x]).
	\end{equation}
	That is \(z_\sigma[x]\) satisfies~\eqref{eq:switching}, i.e., \(z_\sigma[x]
	= z[x]\), and hence, \(\sign(z[x]) = \sigma\).
	If otherwise \(x \in S_\sigma\), then
	\(\Sigma z[x] = \abs{z[x]}\) and
	\begin{equation}
		z[x] = c(x,\abs{z[x]}, z[x]) = c(x, \Sigma z[x], z[x]).
	\end{equation}
	This time \(z[x]\) satisfies~\eqref{eq:fixed-switching}, i.e., \(z[x] =
	z_\sigma[x]\), and as above it follows \(\sign{z_\sigma[x]} = \sigma\).

	Since in both cases \(z[x] = z_\sigma[x]\) it also follows
	\begin{align}
		\varphi(x)
		&= f(x, \abs{z[x]}, z[x])
		= f(x, \Sigma z_\sigma[x], z_\sigma[x])
		= \varphi_\sigma(x).
		\qedhere \end{align}
\end{proof}
If \(S_{\fixed{\sigma}}\) is essential and \(\fixed{\sigma} = \set{-1,1}^s\),
this means that \(\varphi\) was actually smooth at \(\fixed{x}\) and instead of
computing a generalized gradient of \(\varphi\) at \(\fixed{x}\) one can simply
compute a derivative of \(\varphi_{\fixed{\sigma}}\).
In the chain-rule, one then computes
\begin{align}
	\grad \varphi_{\fixed{\sigma}}(\fixed{x})
    &= a + \jac z_{\fixed{\sigma}}[\fixed{x}]^T (\fixed{\Sigma} b + d), \\
    \shortintertext{where}
	\jac z_{\fixed{\sigma}}[\fixed{x}]
    &= \paren{I_s - L\fixed{\Sigma} - M}^{-1} Z \in \reals^{s \times n}.
\end{align}
Even in the case that \(\fixed{\sigma}\) is not definite, the term \(\jac
z_{\fixed{\sigma}}[\fixed{x}]\) is interesting because it provides information
about the local structure of all \(S_\sigma\) with \(\sigma \succ \fixed{\sigma}\),
provided that the following condition holds.

\begin{definition}[linear independence kink qualification (LIKQ)]
    The evaluation procedure \((f, c)\) is said to satisfy the LIKQ condition
    at \(\fixed{x}\) if the matrix
	\begin{equation}
	    P_{\fixed{\alpha}} \paren{I_s - L\fixed{\Sigma} - M}^{-1} Z \in \reals^{\abs{\fixed{\alpha}} \times n}
	\end{equation}
	has full row rank \(\abs{\fixed{\alpha}}\).
\end{definition}

This condition implies that for all \(\sigma \in \succmore{\fixed{\sigma}}\)
with \(\fixed{x}\in \closure{S_\sigma}\), the standard LICQ constraint
qualification holds for \(\closure{S_\sigma}\).
This allows to check optimality conditions for \(\Cabs\)~functions in
polynomial time, which is used, e.g., in the active signature method for
piecewise linear problems~\cite{GW19}, the SALMIN method for general abs-smooth
problems~\cite{FWG19} as well as for constraint piecewise linear
problems~\cite{Kre23}, with an adapted version of LIKQ.
The LIKQ condition can be relaxed to the so-called MFKQ condition, which
corresponds to MFCQ condition on the closure on the adjacent signature domains,
see~\cite[Definition 2.12]{WG19}.

Knowing that \(c\) is twice continuously differentiable, the following lemma
shows that LIKQ is a stable property, in that moving from \(\fixed{x}\) to a
nearby \(x\) and thereby changing from \(\fixed{\sigma}\) to \(\sigma \succeq
\fixed{\sigma}\) does not impact the LIKQ condition.
\begin{lemma}\label{lem:full-rank-for-Sigma}
    The evaluation procedure \((f, c)\) has LIKQ at
    \(\fixed{x}\), if and only if, for all \(\sigma\in \set{-1,0,1}^s\) with
    \(\sigma \succeq \fixed{\sigma}\),
    \begin{equation}\label{eq:full-rank-for-Sigma}
        P_{\fixed{\alpha}}\paren{I_s - L \Sigma - M}^{-1} Z,
    \end{equation}
    where \(\Sigma \coloneqq \diag(\sigma)\), has full row rank
    \(\abs{\fixed{\alpha}}\).
\end{lemma}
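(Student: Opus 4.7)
The direction from the rank condition on all $\sigma \succeq \fixed{\sigma}$ to LIKQ is immediate: the choice $\sigma = \fixed{\sigma}$ is admissible (since $\fixed{\sigma} \succeq \fixed{\sigma}$) and reproduces exactly the matrix appearing in the LIKQ definition. The substance lies in the converse direction. My plan is to write the two matrices under comparison as $P_{\fixed{\alpha}} A$ and $P_{\fixed{\alpha}} A'$, where $A \coloneqq (I_s - L\fixed{\Sigma} - M)^{-1} Z$ and $A' \coloneqq (I_s - L\Sigma - M)^{-1} Z$, and to exhibit an invertible linear map on $\reals^{\abs{\fixed{\alpha}}}$ that intertwines them.

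The precedence $\sigma \succeq \fixed{\sigma}$ is equivalent to $\sigma_i = \fixed{\sigma}_i$ for all $i \notin \fixed{\alpha}$, so $\Delta \coloneqq \Sigma - \fixed{\Sigma}$ is a diagonal matrix whose nonzero entries occur only at positions indexed by $\fixed{\alpha}$. Since $I_s - L\Sigma - M = (I_s - L\fixed{\Sigma} - M) - L\Delta$, the standard resolvent identity for the two invertible matrices yields
\begin{equation}
    A' = T A, \qquad T \coloneqq I_s + (I_s - L\Sigma - M)^{-1} L \Delta.
\end{equation}

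The crucial structural step is the analysis of $T$. By the standing assumption that $L$ and $M$ are strictly lower triangular, $I_s - L\Sigma - M$ is unit lower triangular, and so is its inverse; since $L\Delta$ is strictly lower triangular, the product $(I_s - L\Sigma - M)^{-1} L \Delta$ is strictly lower triangular as well, making $T$ unit lower triangular. Moreover, $\Delta$ has only zero columns outside $\fixed{\alpha}$, a property inherited first by $L\Delta$ and then by $(I_s - L\Sigma - M)^{-1} L \Delta$, so column $j$ of $T$ coincides with $e_j$ for every $j \notin \fixed{\alpha}$.

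Combining these two facts, for each $i \in \fixed{\alpha}$ the $i$-th row of $TA$ reduces to $\sum_{j \in \fixed{\alpha},\, j \leq i} T_{ij} A_j$, which translates into the matrix identity $P_{\fixed{\alpha}} A' = T_{\fixed{\alpha}\fixed{\alpha}} P_{\fixed{\alpha}} A$, where $T_{\fixed{\alpha}\fixed{\alpha}}$ denotes the principal submatrix of $T$ indexed by $\fixed{\alpha}$. As a principal submatrix of a unit lower triangular matrix, $T_{\fixed{\alpha}\fixed{\alpha}}$ is itself unit lower triangular, hence invertible, so the projected matrices have identical row rank and the equivalence follows. The only delicate point is the column bookkeeping that isolates the $\fixed{\alpha}$-block of $T$; once that is in place, the algebraic conclusion is short and the strict lower triangularity hypothesis on the evaluation procedure does essentially all the work.
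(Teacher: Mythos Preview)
Your proof is correct and follows essentially the same strategy as the paper: both exhibit an invertible \(\abs{\fixed{\alpha}}\times\abs{\fixed{\alpha}}\) left factor relating \(P_{\fixed{\alpha}}(I_s-L\Sigma-M)^{-1}Z\) to \(P_{\fixed{\alpha}}(I_s-L\fixed{\Sigma}-M)^{-1}Z\), using that \(\Sigma-\fixed{\Sigma}\) is supported on \(\fixed{\alpha}\). Your matrix \(T=I_s+(I_s-L\Sigma-M)^{-1}L\Delta\) is in fact exactly the paper's \((I_s-\tilde L(\Sigma-\fixed{\Sigma}))^{-1}\); you reach the invertibility of the \(\fixed{\alpha}\)-block via unit lower triangularity, whereas the paper does the same via an explicit \(2\times 2\) block inversion---a cosmetic difference only.
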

\begin{proof}
    Insert \(\pm L \fixed{\Sigma}\) to the middle matrix
    of~\eqref{eq:full-rank-for-Sigma} and rewrite the expression in terms of
    \(\tilde{L} \coloneqq \paren{I_s - L \fixed{\Sigma} - M}^{-1} L\)
    to obtain
    \begin{equation}
        \begin{aligned}
            I_s - L \Sigma - M
            &= I_s - L \fixed{\Sigma} - M - L(\Sigma - \fixed{\Sigma})\\
            &= (I_s - L \fixed{\Sigma} - M)(I_s - \paren{I_s - L \fixed{\Sigma} - M}^{-1} L(\Sigma - \fixed{\Sigma}))\\
            &= (I_s - L \fixed{\Sigma} - M)(I_s - \tilde{L}(\Sigma - \fixed{\Sigma})),
        \end{aligned}
    \end{equation}
    which was already observed in~\cite{GW20b}.
    A multiplication of the inverse with \(P_{\fixed{\alpha}}\) from the left
    thus yields
    \begin{equation}\label{eq:step1:lem:full-rank-for-Sigma}
        P_{\fixed{\alpha}} \paren{I_s - L \Sigma - M}^{-1}
        = P_{\fixed{\alpha}} \paren{I_s - \tilde{L} (\Sigma - \fixed{\Sigma})}^{-1} \paren{I_s - L \fixed{\Sigma} - M}^{-1}.
    \end{equation}
    Let \(Q_{\fixed{\alpha}} \coloneqq \paren{e_i^T}_{i \in \oneto{s}
    \setminus \fixed{\alpha}}\) be the complementary projection to the inactive
    indices, then
    \begin{equation}\label{eq:active-inactive-properties}
        P_{\fixed{\alpha}}P_{\fixed{\alpha}}^T = I_{\abs{\fixed{\alpha}}},
        \; P_{\fixed{\alpha}}Q_{\fixed{\alpha}}^T = 0,
        \; Q_{\fixed{\alpha}}P_{\fixed{\alpha}}^T = 0,
        \; Q_{\fixed{\alpha}}Q_{\fixed{\alpha}}^T = I_{s - \abs{\fixed{\alpha}}},
        \;\text{and}\;
        \begin{bmatrix}
            P_{\fixed{\alpha}}^T & Q_{\fixed{\alpha}}^T
        \end{bmatrix}
        =
        \begin{bmatrix}
            P_{\fixed{\alpha}} \\ Q_{\fixed{\alpha}}
        \end{bmatrix}^{-1}.
    \end{equation}
    The last equality is used to insert an identity in the right-hand side
    of~\eqref{eq:step1:lem:full-rank-for-Sigma} and obtain
    \begin{equation}\label{eq:step2:lem:full-rank-for-Sigma}
        \begin{aligned}
            P_{\fixed{\alpha}} \paren{I_s - L \Sigma - M}^{-1}
            &= P_{\fixed{\alpha}}
                \paren{I_s - \tilde{L} (\Sigma - \fixed{\Sigma})}^{-1}
                \begin{bmatrix} P_{\fixed{\alpha}}^T & Q_{\fixed{\alpha}}^T \end{bmatrix}
                \begin{bmatrix} P_{\fixed{\alpha}} \\ Q_{\fixed{\alpha}} \end{bmatrix}
                \paren{I_s - L \fixed{\Sigma} - M}^{-1}\\
            &= P_{\fixed{\alpha}}
                \paren*{\begin{bmatrix} P_{\fixed{\alpha}} \\ Q_{\fixed{\alpha}} \end{bmatrix}
                        - \begin{bmatrix} P_{\fixed{\alpha}} \\ Q_{\fixed{\alpha}} \end{bmatrix}
                          \tilde{L} (\Sigma - \fixed{\Sigma})}^{-1}
                          \begin{bmatrix} P_{\fixed{\alpha}}\paren{I_s - L \fixed{\Sigma} - M}^{-1} \\
                                          Q_{\fixed{\alpha}}\paren{I_s - L \fixed{\Sigma} - M}^{-1}
                          \end{bmatrix}.
        \end{aligned}
    \end{equation}
    From \(\sigma_{i} = \fixed{\sigma}_{i}\) for \(i \in \oneto{s} \setminus
    \fixed{\alpha}\) it follows \((\Sigma - \fixed{\Sigma})
    Q_{\fixed{\alpha}}^T = 0\), and hence,
    \begin{equation}\label{eq:step3:lem:full-rank-for-Sigma}
        \begin{aligned}
            P_{\fixed{\alpha}}
            \paren*{\begin{bmatrix} P_{\fixed{\alpha}} \\ Q_{\fixed{\alpha}} \end{bmatrix}
                    - \begin{bmatrix} P_{\fixed{\alpha}} \\ Q_{\fixed{\alpha}} \end{bmatrix}
                      \tilde{L} (\Sigma - \fixed{\Sigma})}^{-1}
&= \begin{bmatrix} I_{\abs{\fixed{\alpha}}} & 0 \end{bmatrix}
                \paren*{I_s - \begin{bmatrix} P_{\fixed{\alpha}} \\ Q_{\fixed{\alpha}} \end{bmatrix}
                              \tilde{L} (\Sigma - \fixed{\Sigma})
                              \begin{bmatrix} P_{\fixed{\alpha}}^T & Q_{\fixed{\alpha}}^T \end{bmatrix}}^{-1}\\
            &= \begin{bmatrix} I_{\abs{\fixed{\alpha}}} & 0 \end{bmatrix}
                \begin{bmatrix}
                    I_{\abs{\fixed{\alpha}}} -P_{\fixed{\alpha}} \tilde{L} (\Sigma - \fixed{\Sigma}) P_{\fixed{\alpha}}^T & 0 \\
                    Q_{\fixed{\alpha}} \tilde{L} (\Sigma - \fixed{\Sigma}) P_{\fixed{\alpha}}^T & I_{s - \abs{\fixed{\alpha}}} \\
                \end{bmatrix}^{-1}\\
            &\eqqcolon
            \begin{bmatrix}
                X_1 & X_2
            \end{bmatrix}.
        \end{aligned}
    \end{equation}
    The definition of the two matrices \(X_1 \in \reals^{\abs{\fixed{\alpha}}
    \times \abs{\fixed{\alpha}}}\) and \(X_2 \in \reals^{\abs{\fixed{\alpha}}
    \times (s- \abs{\fixed{\alpha}})}\) implies
    \begin{equation}
        \begin{bmatrix}
            X_1 & X_2
        \end{bmatrix}
        \begin{bmatrix}
            I_{\abs{\fixed{\alpha}}} -P_{\fixed{\alpha}} \tilde{L} (\Sigma - \fixed{\Sigma}) P_{\fixed{\alpha}}^T & 0 \\
            Q_{\fixed{\alpha}} \tilde{L} (\Sigma - \fixed{\Sigma}) P_{\fixed{\alpha}}^T & I_{s - \abs{\fixed{\alpha}}} \\
        \end{bmatrix}
        =
        \begin{bmatrix}
            I & 0
        \end{bmatrix},
    \end{equation}
    which leads to \(X_2 = 0\) and \(X_1 = \paren{I_{\abs{\fixed{\alpha}}} - P_{\fixed{\alpha}}
    \tilde{L} (\Sigma - \fixed{\Sigma}) P_{\fixed{\alpha}}^T}^{-1}\).
    Combining this
    with~\eqref{eq:step1:lem:full-rank-for-Sigma},~\eqref{eq:step2:lem:full-rank-for-Sigma}
    and~\eqref{eq:step3:lem:full-rank-for-Sigma} and finally multiplying with
    \(Z\) from the right yields
    \begin{align}
        P_{\fixed{\alpha}} \paren{I_s - L \Sigma - M}^{-1} Z
        &=
        \begin{bmatrix} \paren{I_{\abs{\fixed{\alpha}}} - P_{\fixed{\alpha}} \tilde{L} (\Sigma - \fixed{\Sigma}) P_{\fixed{\alpha}}^T}^{-1} & 0 \end{bmatrix}
        \begin{bmatrix} P_{\fixed{\alpha}}\paren{I_s - L \fixed{\Sigma} - M}^{-1} \\ Q_{\fixed{\alpha}}\paren{I_s - L \fixed{\Sigma} - M}^{-1} \end{bmatrix} Z\\
        &= \paren{I_{\abs{\fixed{\alpha}}} - P_{\fixed{\alpha}} \tilde{L} (\Sigma - \fixed{\Sigma}) P_{\fixed{\alpha}}^T}^{-1}P_{\fixed{\alpha}}\paren{I_s - L \fixed{\Sigma} - M}^{-1} Z.
    \end{align}
    Since transformation by a full rank matrix does not change the rank, this
    shows the desired identity by the definition of LIKQ.
\end{proof}

\section{Computing Generalized Gradients under LIKQ}\label{sec:gradients}
The aim of this section is to characterize generalized gradients of the
abs-smooth \(\varphi\) at \(\fixed{x}\) in terms of the gradients \(\grad
\varphi_\sigma(\fixed{x})\), \(\sigma \in \succmore{\fixed{\sigma}}\).
\begin{definition}[limiting gradients and Clark generalized gradients]
    Let \(\varphi \colon \reals^n \to \reals\) be locally Lipschitz continuous.
    The set of \emph{limiting gradients} of \(\varphi\) at \(\fixed{x} \in \reals^n\) is defined
    by
    \begin{equation}
        \bouligand{\varphi}(\fixed{x})
        \coloneqq
        \set{g \in \reals^n \with g = \lim \grad{\varphi}(x_k), x_k \in D_{\varphi}, x_k \to \fixed{x}},
    \end{equation}
    where \(D_{\varphi}\) is the set of differentiable points of \(\varphi\).
    Furthermore,
    \begin{equation}
        \clarke{\varphi}(\fixed{x}) \coloneqq \conv\bouligand{\varphi}(\fixed{x})
    \end{equation}
    denotes the set of all \emph{Clarke generalized gradients} of \(\varphi\) at
    \(\fixed{x}\).
    By~\cite[Theorem~2.5.1]{Cla90} both sets \(\bouligand{\varphi}(\fixed{x})\)
    and \(\clarke{\varphi}(\fixed{x})\) are non-empty.
\end{definition}
In~\cite[Proposition 4.3.1]{Sch12}, Scholtes essentially proofs
\begin{equation}\label{eq:bouligand_scholtes}
	\bouligand{\varphi}(\fixed{x})
	= \Union_{\mathclap{\sigma \in \mathcal{E}_e(\fixed{x})}}\; \set{\grad \varphi_\sigma(\fixed{x})},
\end{equation}
where \(\mathcal{E}_e(\fixed{x})\) is the set of essentially active signatures,
i.e., the set of all \(\sigma \in \set{-1,0,1}^s\) such that \(\fixed{x}\in
\closure{\operatorname{int}(S_\sigma)}\) and \(S_\sigma\) is essential.
In~\cite{Gri13}, the concept of conical gradients was introduced.
These are the gradients of all \(\varphi_\sigma(\fixed{x})\) for which the
linearized cones of the signature domains \(S_\sigma\) have non-empty interior.
The set of all \emph{conically active} signatures is denoted by
\(\mathcal{E}_c(\fixed{x})\).
It was already shown in~\cite{Gri13} that \(\mathcal{E}_c \subseteq \mathcal{E}_e\).
This was used to proof that conical gradients, which can be computed by
polynomial escape, see~\cite[Proposition~6]{Gri13}, are always limiting and
thus Clarke gradients.
Under the aforementioned MFKQ, all linearized cones remain essential if the
original \(S_\sigma\) was, hence, \(\mathcal{E}_c = \mathcal{E}_e\), see
\cite{WG19}.

Nevertheless, characterizing \(\mathcal{E}_e(\fixed{x})\) algebraically for general
\(\Cabs\) functions remains a non-trivial task. However, the first major result
of this section, Corollary~\ref{cor:bouligand}, shows that under the assumption
of LIKQ, \(\mathcal{E}_e(\fixed{x})\) can be replaced by
\(\succmore{\fixed{\sigma}}\). The significance of this is that in backward mode
algorithmic differentiation any choice of \(\partial \abs{\cdot}(0)=\pm 1\)
yields a limiting gradient because, under LIKQ, \(\grad \varphi_\sigma(\fixed{x})
\in \bouligand{\varphi}(\fixed{x})\) holds for all \(\sigma \in
\succmore{\fixed{\sigma}}\). This is extended to choices between \(-1\) and \(1\)
yielding Clarke generalized gradients in Theorem~\ref{thm:ad-grads-are-clarke}.
The following lemma is a key ingredient in proving these identities.

\begin{lemma}\label{lem:all-essential}
    Assume that the evaluation procedure \((f, c)\) of \(\varphi \in \Cabs[2,
    s](\reals^n)\) has LIKQ at \(\fixed{x}\).
    Then, there is \(r > 0\) such that for \(\tilde{x} \in
    \ball{r}{\fixed{x}}\) and all \(\sigma \in \succmore{\sign(z[\tilde{x}])}\)
    the signature domain \(S_\sigma\) is essential,
    \(\succmore{\sign(z[\tilde{x}])} \subseteq \succmore{\fixed{\sigma}}\), and
    \(\tilde{x} \in \closure{S_\sigma}\).
\end{lemma}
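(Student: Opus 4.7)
The plan is to establish the three assertions in order, using continuity of $z[\cdot]$, the stability result Lemma~\ref{lem:full-rank-for-Sigma}, and a first-order perturbation argument based on a surjective Jacobian.

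The continuity (and in fact local Lipschitz-ness) of $z[\cdot]$ follows by solving~\eqref{eq:switching} component by component, exploiting the strict lower triangularity of $\partial_2 c$ and $\partial_3 c$ together with $c \in C^2$. For any index $i$ with $\fixed{\sigma}_i \neq 0$, the value $z[\tilde{x}]_i$ stays nonzero of the same sign in a neighborhood of $\fixed{x}$; taking the intersection over the finitely many such indices yields $r_1 > 0$ such that $\tilde{\sigma} \coloneqq \sign(z[\tilde{x}]) \succeq \fixed{\sigma}$ for all $\tilde{x} \in \ball{r_1}{\fixed{x}}$. This immediately gives $\succmore{\tilde{\sigma}} \subseteq \succmore{\fixed{\sigma}}$ and $\tilde{\alpha} \coloneqq \set{i \with \tilde{\sigma}_i = 0} \subseteq \fixed{\alpha}$, which settles the second assertion.

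Next, I would transfer LIKQ from $\fixed{x}$ to $\tilde{x}$. Lemma~\ref{lem:full-rank-for-Sigma} guarantees that for every $\sigma \succeq \fixed{\sigma}$ the matrix $P_{\fixed{\alpha}}(I_s - L \diag(\sigma) - M)^{-1} Z$, evaluated at $\fixed{x}$, has full row rank $\abs{\fixed{\alpha}}$. Its entries depend continuously on the base point (as $c \in C^2$), full row rank is an open condition, and the set of admissible $\sigma$ is finite, so a common radius $r \in (0, r_1]$ can be chosen such that the analogous matrix at $\tilde{x}$ still has full row rank $\abs{\fixed{\alpha}}$ for all $\tilde{x} \in \ball{r}{\fixed{x}}$ and all $\sigma \succeq \fixed{\sigma}$; denote by tildes the evaluation at $\tilde{x}$. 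Because $\tilde{\alpha} \subseteq \fixed{\alpha}$, discarding the rows indexed by $\fixed{\alpha} \setminus \tilde{\alpha}$ preserves linear independence, so $P_{\tilde{\alpha}}(I_s - \tilde{L}\diag(\sigma) - \tilde{M})^{-1}\tilde{Z}$ has full row rank $\abs{\tilde{\alpha}}$ for every $\sigma \in \succmore{\tilde{\sigma}}$.

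With surjectivity of $P_{\tilde{\alpha}}\jac z_\sigma[\tilde{x}] = P_{\tilde{\alpha}}(I_s - \tilde{L}\diag(\sigma) - \tilde{M})^{-1}\tilde{Z}$ onto $\reals^{\abs{\tilde{\alpha}}}$ in hand, the remaining two assertions follow from a first-order perturbation. For $\sigma \in \succmore{\tilde{\sigma}}$, the identity $\diag(\sigma) z[\tilde{x}] = \abs{z[\tilde{x}]}$ shows that $z[\tilde{x}]$ satisfies~\eqref{eq:fixed-switching}, hence $z_\sigma[\tilde{x}] = z[\tilde{x}]$ and $P_{\tilde{\alpha}} z_\sigma[\tilde{x}] = 0$. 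Picking $v \in \reals^n$ with $P_{\tilde{\alpha}}\jac z_\sigma[\tilde{x}] v = (\sigma_i)_{i \in \tilde{\alpha}}$ and setting $x_\epsilon \coloneqq \tilde{x} + \epsilon v$, a Taylor expansion of the smooth map $z_\sigma[\cdot]$ yields $z_\sigma[x_\epsilon]_i = \epsilon\sigma_i + O(\epsilon^2)$ for $i \in \tilde{\alpha}$, which has sign $\sigma_i$ for all small $\epsilon > 0$; for $i \notin \tilde{\alpha}$, continuity keeps $z_\sigma[x_\epsilon]_i$ close to $z[\tilde{x}]_i \neq 0$, preserving the sign $\tilde{\sigma}_i = \sigma_i$. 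Hence $\sign(z_\sigma[x_\epsilon]) = \sigma$, and Lemma~\ref{lem:consitent-solution} places $x_\epsilon \in S_\sigma$. Openness of $S_\sigma$ for definite $\sigma$ is immediate from continuity of $z[\cdot]$, so $S_\sigma$ is essential, and $\tilde{x} = \lim_{\epsilon \downto 0} x_\epsilon \in \closure{S_\sigma}$.

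I expect the delicate step to be the LIKQ propagation in the second paragraph: Lemma~\ref{lem:full-rank-for-Sigma} only provides full row rank at $\fixed{x}$ for the original active set $\fixed{\alpha}$, while LIKQ at $\tilde{x}$ concerns the possibly smaller set $\tilde{\alpha}$. The combination of continuous perturbation of $L, M, Z$ with the row-subset observation $\tilde{\alpha} \subseteq \fixed{\alpha}$ is what makes the transition work uniformly in a ball around $\fixed{x}$, without having to redo the rank analysis separately at each point.
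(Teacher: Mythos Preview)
Your proof is correct and follows the same overall strategy as the paper (continuity gives $\tilde{\sigma}\succeq\fixed{\sigma}$; full row rank on the active components of $\jac z_\sigma$ yields a direction along which $\tilde{x}$ is approached from $S_\sigma$), but the technical execution differs in one meaningful way.

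The paper keeps the matrices $L,M,Z$ evaluated at $\fixed{x}$ throughout. It defines the search direction $d_\sigma$ via the pseudo-inverse of $P_{\tilde{\alpha}}(I-L\Sigma-M)^{-1}Z$ \emph{at $\fixed{x}$}, then compensates for the base-point mismatch $\tilde{x}\neq\fixed{x}$ by inserting $\jac z_\sigma[\fixed{x}]$ into the Taylor expansion around $\tilde{x}$ and bounding the correction term $(\jac z_\sigma[\tilde{x}]-\jac z_\sigma[\fixed{x}])d_\sigma$ via the Lipschitz constant of $\jac z_\sigma$. This is where the radius $r$ enters quantitatively, through $\operatorname{Lip}(\jac z_\sigma)\,r\,\|d_\sigma\|\le 1/2$. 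You instead move the full-rank property itself to $\tilde{x}$: you first invoke openness of the full-rank condition (plus finiteness of the set of $\sigma$) to get the rank at $\tilde{x}$ for $P_{\fixed{\alpha}}$, then restrict to $P_{\tilde{\alpha}}$, and finally pick a direction $v$ directly from surjectivity of $P_{\tilde{\alpha}}\jac z_\sigma[\tilde{x}]$. This lets your Taylor expansion at $\tilde{x}$ go through without any correction term. The paper's route gives a direction $d_\sigma$ independent of $\tilde{x}$ and makes the dependence of $r$ on the data more explicit; your route is conceptually cleaner and avoids the Lipschitz estimate altogether. Both are valid.
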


\begin{proof}
    Let \(\tilde{x} \in \ball{r}{\fixed{x}}\) and define \(\tilde{\sigma}
    \coloneqq \sign(z[\tilde{x}])\), \(\tilde{\Sigma} =
    \diag(\tilde{\sigma})\) as well as \(\tilde{\alpha} \coloneqq \set{i \with
    \tilde{\sigma} = 0}\).
    For \(r > 0\) small enough the continuity of \(z[\cdot]\) ensures that if
    \(\fixed{z}_i \neq 0\), it holds \(\sign(z[\tilde{x}]_i) =
    \sign(z[\fixed{x}]_i)\).
    Therefore, \(\tilde{\sigma} \succeq \fixed{\sigma}\) and since \(\succeq\) is a
    partial ordering, it holds \(\succmore{\tilde{\sigma}} \subseteq
    \succmore{\fixed{\sigma}}\).
	This also implies that \(\tilde{\alpha} \subseteq \fixed{\alpha}\).

	Now let \(\sigma \in \succmore{\tilde{\sigma}} \subseteq \succmore{\fixed{\sigma}}\).
    With LIKQ, Lemma~\ref{lem:full-rank-for-Sigma} ensures that the matrix
    \(P_{\fixed{\alpha}}\paren{I - L\Sigma - M}^{-1}Z\) has full row rank.
    Using \(\tilde{\alpha} \subseteq \fixed{\alpha}\), the smaller matrix
    \(P_{\tilde{\alpha}}\paren{I - L\Sigma - M}^{-1}Z\) also has full row rank
    \(\abs{\tilde{\alpha}}\).
 	For \(\varepsilon > 0\) define \(d_\sigma \coloneqq \paren{P_{\tilde{\alpha}}\paren{I - L\Sigma -
    M}^{-1}Z}^\pinv P_{\tilde{\alpha}}\sigma\) and \(x_{\sigma, \varepsilon}
    \coloneqq \tilde{x} + \varepsilon d_\sigma\).
    By Taylor's theorem for \(z_\sigma[\cdot]\) it follows
    \begin{equation}
    	\label{eq:taylor_for_xeps}
        \begin{aligned}
            z_\sigma[x_{\sigma, \varepsilon}]
            &= z_\sigma[\tilde{x}] + \varepsilon \jac z_\sigma[\tilde{x}] d_\sigma + \mathcal{O}(\varepsilon^2)\\
            &= z_\sigma[\tilde{x}] + \varepsilon \jac z_\sigma[\fixed{x}] d_\sigma + \varepsilon (\jac z_\sigma[\tilde{x}] - \jac z_\sigma[\fixed{x}]) d_\sigma + \mathcal{O}(\varepsilon^2).
        \end{aligned}
    \end{equation}
	By the implicit function theorem, it holds
	\begin{equation}
		\jac z_\sigma[\fixed{x}]
		= \paren{I - L\Sigma - M}^{-1}Z.
	\end{equation}
    For \(i \in \tilde{\alpha}\) the definitions of \(P_{\tilde{\alpha}}\) and
    \(d_\sigma\) yield
	\begin{equation}
        \label{eq:sign-ztilde-active}
        (\jac z_\sigma[\fixed{x}] d_\sigma)_i
        = (P_{\tilde{\alpha}}^T P_{\tilde{\alpha}} \jac z_\sigma[\fixed{x}] d_\sigma)_i
        = (P_{\tilde{\alpha}}^T P_{\tilde{\alpha}} \sigma)_i
        = \sigma_i.
	\end{equation}
    Moreover, Lemma~\ref{lem:consitent-solution} and
    \(z_{\tilde{\sigma}}[\tilde{x}] = z_{\sigma}[\tilde{x}]\), since
    \(\tilde{\Sigma}z_{\tilde{\sigma}}[\tilde{x}] = \Sigma
    z_{\tilde{\sigma}}[\tilde{x}]\), ensure \(0 = z[\tilde{x}]_i =
    z_{\tilde{\sigma}}[\tilde{x}]_i = z_\sigma[\tilde{x}]_i\).
    This together
    \newcommand{\LipDz}{\operatorname{Lip}(\jac z_\sigma)}
    with~\eqref{eq:taylor_for_xeps},~\eqref{eq:sign-ztilde-active}, and the
    Lipschitz-continuity of \(\jac z_\sigma[\cdot]\), with Lipschitz constant
    \(\LipDz\), shows that
    \begin{equation}
        \label{eq:correct-prescribed-sign}
        \begin{aligned}
            \sigma_i z_\sigma[x_{\sigma, \varepsilon}]_i
&= \sigma_i (z_\sigma[x_{\sigma, \varepsilon}]_i - z_\sigma[\tilde{x}]_i)\\
            &= \varepsilon \sigma_i (\jac z_\sigma[\fixed{x}] d_\sigma)_i + \varepsilon \sigma_i ((\jac z_\sigma[\tilde{x}] - \jac z_\sigma[\fixed{x}]) d_\sigma)_i + \mathcal{O}(\varepsilon^2)\\
&\ge \varepsilon \sigma_i^2 - \varepsilon \LipDz \norm{\tilde{x} - \fixed{x}}\norm{d_\sigma}  + \mathcal{O}(\varepsilon^2)\\
            &\geq \varepsilon (1 -  \LipDz r\norm{d_\sigma}) + \mathcal{O}(\varepsilon^2).
        \end{aligned}
    \end{equation}
    For \(r >0\) sufficiently small such that \(\LipDz r \norm{d_\sigma} \le
    1/2\), this yields
	\begin{equation}
	   \sigma_i z_\sigma[x_{\sigma, \varepsilon}]_i
       \ge \varepsilon (1 -  \LipDz r\norm{d_\sigma}) + \mathcal{O}(\varepsilon^2)  \geq \tfrac\varepsilon2 + \mathcal{O}(\varepsilon^2). \\
	\end{equation}
    Therefore, for all \(\varepsilon\) sufficiently small it holds \(\sigma_i
    z_\sigma[x_{\sigma, \varepsilon}]_i > 0\), hence, \(\sign(z_\sigma[x_{\sigma,
    \varepsilon}]_i)=\sigma_i\).
    For \(i \in \oneto{s} \setminus \tilde{\alpha}\), the latter also holds
    because \(z_\sigma[\cdot]\) depends continuously on its argument.
    Altogether again with Lemma~\ref{lem:consitent-solution}, it holds \(
    \sigma = \sign(z_\sigma[x_{\sigma, \varepsilon}])=  \sign(z[x_{\sigma,
    \varepsilon}])\), and thus, \(x_{\sigma, \varepsilon} \in S_\sigma\) all
    \(\varepsilon\) sufficiently small.
    In particular, \(S_\sigma\) is non-empty. Taking the limit \(\varepsilon \to
    0\) shows \(\tilde{x} \in \closure{S_\sigma}\).

    Now let \(x \in S_\sigma\) be arbitrary.
    Since \(z_\sigma[\cdot]\) is continuous, there exists a \(0 < \delta \in
    \reals\) such that \(\ball{\delta}{x} \subseteq S_\sigma\), and hence,
    \(S_\sigma\) is open.
\end{proof}
For \(\tilde{x}=\fixed{x}\) the above lemma proofs that all adjacent signature
domains with definite signatures are essential. However, this does not directly
ensure that there are no essential signature domains with vanishing signatures.
To obtain this result it is necessary to exploit the smoothness of \(c\) and
provide the above lemma also for all \(\tilde{x}\) sufficiently close to
\(\fixed{x}\). This can now be used in the following corollary, which is inspired
by the proof of Scholtes in~\cite{Sch12} with slight adaptions for the setting
at hand.

\begin{corollary}[Scholtes]
    \label{cor:bouligand}
    Assume that the evaluation procedure \((f, c)\) of \(\varphi \in \Cabs[2,
    s](\reals^n)\) has LIKQ at \(\fixed{x}\).
    Then,
    \begin{equation}
        \bouligand{\varphi}(\fixed{x})
        = \Union_{\mathclap{\sigma \in \succmore{\fixed{\sigma}}}}\; \set{\grad \varphi_\sigma(\fixed{x})}.
    \end{equation}
\end{corollary}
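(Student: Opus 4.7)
My plan is to use Scholtes' formula~\eqref{eq:bouligand_scholtes} as a stepping stone: it already presents $\bouligand{\varphi}(\fixed{x})$ as the union of $\grad\varphi_\sigma(\fixed{x})$ over the essentially active signatures $\mathcal{E}_e(\fixed{x})$, so under LIKQ the task reduces to showing that this union agrees with the one indexed by $\succmore{\fixed{\sigma}}$. I would prove the two inclusions separately.

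For the inclusion ``$\supseteq$'', I fix $\sigma \in \succmore{\fixed{\sigma}}$ and apply Lemma~\ref{lem:all-essential} with $\tilde{x} = \fixed{x}$ to obtain that $S_\sigma$ is open, non-empty, and has $\fixed{x}$ in its closure. Selecting a sequence $x_k \in S_\sigma$ with $x_k \to \fixed{x}$, Lemma~\ref{lem:consitent-solution} lets me identify $\varphi$ with the globally $C^2$ function $\varphi_\sigma$ on $S_\sigma$, so $x_k \in D_\varphi$ with $\grad\varphi(x_k) = \grad\varphi_\sigma(x_k)$, and continuity in the limit places $\grad\varphi_\sigma(\fixed{x})$ in $\bouligand{\varphi}(\fixed{x})$.

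For the inclusion ``$\subseteq$'', by \eqref{eq:bouligand_scholtes} it suffices to verify $\mathcal{E}_e(\fixed{x}) \subseteq \succmore{\fixed{\sigma}}$. Given $\sigma \in \mathcal{E}_e(\fixed{x})$, a sequence $x_k \in S_\sigma$ tending to $\fixed{x}$ combined with continuity of $z[\cdot]$ yields $\sigma_i = \fixed{\sigma}_i$ on every coordinate where $\fixed{\sigma}_i \ne 0$, hence $\sigma \succeq \fixed{\sigma}$. What remains is to rule out $\sigma_i = 0$ for some $i$. I would proceed by contradiction: pick any $\sigma' \in \succmore{\sigma}$ with $\sigma'_i = 1$, and for $k$ large enough apply Lemma~\ref{lem:all-essential} at $x_k$ (which lies in the lemma's $r$-ball around $\fixed{x}$) to conclude $x_k \in \closure{S_{\sigma'}}$. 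But $x_k \in \operatorname{int}(S_\sigma)$ admits a neighborhood contained in $S_\sigma$, which is disjoint from $S_{\sigma'}$ since signature domains are pairwise disjoint; contradiction. Hence $\sigma$ is definite and belongs to $\succmore{\fixed{\sigma}}$.

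The principal obstacle is this last step, which is precisely where LIKQ enters. Scholtes' formula a priori admits essential signatures with vanishing components, and only the ``density of definite signature domains'' content of Lemma~\ref{lem:all-essential}, itself a consequence of LIKQ through the explicit direction $d_{\sigma'}$ in its proof, rules these out. The other arguments are routine continuity and the already-established agreement of $\varphi$ with the smooth pieces $\varphi_\sigma$ on the corresponding signature domains.
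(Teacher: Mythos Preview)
Your proof is correct, and the ``$\supseteq$'' direction coincides with the paper's. For ``$\subseteq$'', however, you take a genuinely different route. You invoke Scholtes' formula~\eqref{eq:bouligand_scholtes} and reduce the problem to the structural statement $\mathcal{E}_e(\fixed{x}) \subseteq \succmore{\fixed{\sigma}}$, which you then establish by a clean contradiction: an indefinite essentially active $\sigma$ would have $x_k \in S_\sigma$ (open) while Lemma~\ref{lem:all-essential} forces $x_k \in \closure{S_{\sigma'}}$ for a strictly larger definite $\sigma'$, violating disjointness of signature domains. The paper, by contrast, does not appeal to~\eqref{eq:bouligand_scholtes} at all. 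It works directly from the definition of $\bouligand{\varphi}(\fixed{x})$: given a defining sequence $x_k \to \fixed{x}$ of smooth points, it uses Lemma~\ref{lem:all-essential} at each $x_k$ to place $x_k \in \closure{S_{\sigma_k}}$ for some $\sigma_k \in \succmore{\fixed{\sigma}}$, extracts a subsequence with constant $\sigma$, and then identifies $\grad\varphi(x_{k_l}) = \grad\varphi_\sigma(x_{k_l})$ on $\closure{S_\sigma}$ via the singleton $\bouligand{\varphi}(x_{k_l})$ at smooth points. Your approach is more modular and isolates the underlying reason (no indefinite signature can be essentially active under LIKQ) as a standalone fact; the paper's approach is self-contained in that it bypasses the need to quote~\eqref{eq:bouligand_scholtes}, at the price of a slightly more delicate boundary argument.
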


\begin{proof}
    Let \(\sigma \in \succmore{\fixed{\sigma}}\), then by
    Lemma~\ref{lem:all-essential}, \(S_\sigma\) is essential and \(\fixed{x}\in
    \closure{S_\sigma}\), hence, there is a sequence \(\seq{x_k}{k \in
    \nats}\) on \(S_\sigma\) such that \(x_k \to \fixed{x}\).
    Lemma~\ref{lem:consitent-solution} ensures that \(\varphi_\sigma =
    \varphi\) on \(S_\sigma\), hence \(\varphi\) is smooth and \(\grad \varphi_\sigma =
    \grad \varphi\) on \(S_\sigma\).
    This implies
    \begin{equation}
        \grad \varphi_\sigma(\fixed{x})
= \lim_{k \to \infty} \grad \varphi_\sigma(x_k)
        = \lim_{k \to \infty} \grad \varphi(x_k)
        \in \bouligand{\varphi}(\fixed{x}).
    \end{equation}

	Let now \(g \in \bouligand{\varphi}(\fixed{x})\).
    Then, there exists a sequence \(\seq{x_k}{k \in \nats}\) on \(\reals^d\)
    with \(x_k \to \fixed{x}\) such that \(\varphi\) is smooth at every \(x_k\)
    and \(\grad \varphi(x_k) \to g\).
    Let \(r > 0\) such that Lemma~\ref{lem:all-essential} holds for \(\fixed{x}\).
    Now for \(k\) large enough such that all \(x_k \in B_r(\fixed{x})\), it
    holds that \(x_k \in \closure{S_{\sigma_k}}\) for all \(\sigma_k \in
    \succmore{\sign(z[x_k])} \subseteq \succmore{\fixed{\sigma}}\).
    In particular, because \(\succmore{\sign(z[x_k])} \neq \emptyset\), there is
    always such a \(\sigma_k \in \succmore{\fixed{\sigma}}\) for \(x_k \in
    B_r(\fixed{x})\).

    However, there are only a finite number of elements in
    \(\succmore{\fixed{\sigma}}\).
    Hence, there is \(\sigma \in \succmore{\fixed{\sigma}}\) such that
    there is a subsequence \(\seq{x_{k_l}}{l \in \nats}\) with \(x_{k_l} \in
    \closure{S_{\sigma}}\).

    Lemma~\ref{lem:consitent-solution} ensures that for all \(x\in
    S_{\sigma}\), \(\varphi(x) = \varphi_{\sigma}(x)\).
    Since by Lemma~\ref{lem:all-essential}, \(S_{\sigma}\) is essential,
    \(\grad \varphi(x) = \grad \varphi_{\sigma}(x)\) holds for all \(x\in
    S_{\sigma}\).
    For any \(x_{k_l}\in \closure{S_{\sigma}}\), this yields \(\grad
    \varphi_{\sigma}(x_{k_l}) \in \bouligand{\varphi}(x_{k_l}) = \set{\grad
    \varphi(x_{k_l})}\) because \(\varphi\) is smooth at \(x_{k_l}\).
    This implies \(\grad \varphi_{\sigma}(x_{k_l}) = \grad \varphi(x_{k_l})\), and
    taking the limit yields \(\grad \varphi_{\sigma}(\fixed{x}) = g\).
\end{proof}

Without the LIKQ condition one does in general not obtain a limiting gradient
as the following example shows.
\begin{example}
    Let \(n = 2\), \(s = 3\) and \(\mu \in \reals\), and define
    \(f^\mu(x, y, z) \coloneqq y_1 + y_2 + \mu y_3\) and
    \begin{equation}
        c(x, y, z) \coloneqq
        \begin{pmatrix}
            x_1 \\
            x_2 - x_1 \\
            1 - \cos(x_1) - \sin(x_2)
        \end{pmatrix}.
    \end{equation}
    The overall function \(\varphi^\mu \in \Cabs[2, s](\reals^2)\) that is
    represented by \((f^\mu, c)\) then reads
    \begin{equation}
        \varphi^\mu(x)
        = \abs{x_1} + \abs{x_2 - x_1} + \mu \abs{1 - \cos(x_1) - \sin(x_2)}.
    \end{equation}
    \begin{figure}[htpb]
        \begin{subfigure}[b]{\textwidth/3}
            \centering
            \includegraphics{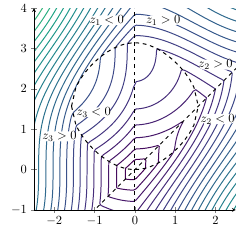}
            \caption{\(\mu = 1\)}
            \label{fig:contours-mu-one}
        \end{subfigure}\begin{subfigure}[b]{\textwidth/3}
            \centering
            \includegraphics{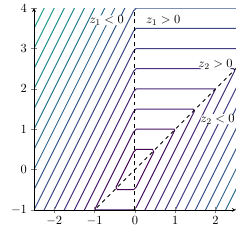}
            \caption{\(\mu = 0\)}
            \label{fig:contours-mu-zero}
        \end{subfigure}\begin{subfigure}[b]{\textwidth/3}
            \centering
            \includegraphics{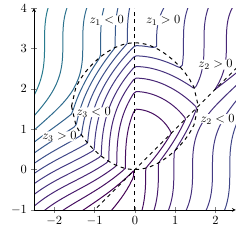}
            \caption{\(\mu = -1\)}
            \label{fig:contours-mu-minus-one}
        \end{subfigure}
        \caption{Level sets of \(\varphi^\mu\) for different values of \(\mu\).
            Points of non-differentiability lie along the dashed lines.
        }
        \label{fig:contours-example-one}
    \end{figure}

    Fix \(\fixed{x} \coloneqq (0, 0)\) it holds \(z[\fixed{x}] = (0, 0, 0)\),
    \(\fixed{\sigma} = (0, 0, 0)\) and \(\succmore{\fixed{\sigma}} = \set{-1,
    1}^3\), and \(\fixed{\alpha} = \set{1, 2, 3}\), i.e., \(P_{\fixed{\alpha}}
    = I_3\).
    Moreover, it holds that \(L = M = 0\) and
    \begin{equation}
        Z =
        \begin{pmatrix}
            1 & 0 \\
            -1 & 1 \\
            0 & -1
        \end{pmatrix},
    \end{equation}
    such that \(P_{\fixed{\alpha}} \paren{I_3 - L\diag(\fixed{\sigma}) -
    M}^{-1}Z = Z\) is of rank \(2 < \abs{\fixed{\alpha}} = 3\).
    Hence, \(\varphi^\mu = (f^\mu, c)\) does not have LIKQ at \(\fixed{x}\).
    For \(\sigma \in \succmore{\fixed{\sigma}}\) the smooth function
    \(\varphi^\mu_\sigma\) on reads
    \begin{equation}
        \varphi^\mu_\sigma(x) = \sigma_1 x_1 + \sigma_2(x_2 - x_1) + \mu \sigma_3(1 - \cos(x_1) - \sin(x_2)),\\
    \end{equation}
    and its gradient at \(\fixed{x}\) is given by
    \begin{equation}
        \grad \varphi^\mu_\sigma(\fixed{x}) =
        \begin{pmatrix}
            \sigma_1 - \sigma_2 \\
            \sigma_2 - \mu \sigma_3
        \end{pmatrix}.
    \end{equation}
    Figure~\ref{fig:grads-mu} illustrates the limiting gradients
    \(\bouligand{\varphi^\mu}(\fixed{x})\), the Clarke gradients
    \(\clarke{\varphi^\mu}(\fixed{x})\), and the gradients \(\grad
    \varphi^\mu_\sigma(\fixed{x})\) obtained by backward mode differentiation for
    all possible choices of \(\sigma \in \set{-1, 1}^3\).
    For \(\mu\geq0\), chain-rule from convex analysis holds.
    Therefore, a convex subgradient, i.e., a Clark generalized gradient, is
    obtained for all signatures.
    In the non-convex setting, however, the signatures \((1,1,1)\) and
    \((-1,-1,-1)\) do not yield Clarke generalized gradients. For \(\sigma =
    (-1,-1,-1)\) this is due to \(S_{(-1,-1,-1)}\) begin empty, see
    Figure~\ref{fig:contours-example-one}.
    Although \(S_{(1,1,1)}\) is non-empty, \(\grad
    \varphi^\mu_{(1,1,1)}(\fixed{x})\) is no limiting gradient because
    \(\fixed{x} \notin \closure{S_{(1,1,1)}}\).
    It is easy to see that the other signatures satisfy \(\fixed{x}\in
    \closure{S_\sigma}\) with MFCQ.
    Therefore, MFKQ as defined in~\cite[Definition~2.12]{WG19} holds in this
    example.
\begin{figure}[htpb]
        \tikzset{bgrad/.style = {
                inner sep=0pt,
                outer sep=0pt,
                circle,
                fill,
                minimum size=4.2pt,
                opacity=1,
                black,
            },
            adgrad/.style = {
                bgrad,
                minimum size=2.8pt,
                red,
            },
            gradlabel/.style = {
                black,
                inner sep=0pt,
                label distance=4pt,
                opacity=1,
                outer sep=0pt,
                right,
            },
            clarkegrads/.style = {
                fill,
                gray,
                opacity=.5,
            }
        }

        \newcommand{\drawgradients}[1]{
            \draw[->, dotted] (-2.4, 0) -- (2.6, 0);
            \draw[->, dotted] (0, -2.4) -- (0, 2.6);
            \foreach \muval in {#1}{
                \foreach \sigmaone in {-1, 1} {
                    \foreach \sigmatwo in {-1, 1} {
                        \foreach \sigmathree in {-1, 1} {
                            \coordinate (sigma1=\sigmaone_sigma2=\sigmatwo_sigma3=\sigmathree_mu=#1)
                                at (\sigmaone - \sigmatwo, \sigmatwo - \muval*\sigmathree) {};
                        }
                    }
                }
            }
            \coordinate (sigma1=1_sigma2=1_sigma3=1_mu=#1) at (0, 1 - #1);
            \coordinate (sigma1=-1_sigma2=-1_sigma3=-1_mu=#1) at (0, -1 + #1);
        }

        \begin{subfigure}[b]{\textwidth/3}
            \centering
            \tikzsetnextfilename{example-one-grads-mu-one}
            \begin{tikzpicture}[scale=.74,]
                \drawgradients{1}
                \draw[clarkegrads]
                    (sigma1=1_sigma2=1_sigma3=-1_mu=1) node[bgrad, label={[gradlabel]90:\tiny{$(1,\!1,\!-\!1)$}}] {} --
                    (sigma1=-1_sigma2=1_sigma3=-1_mu=1) node[bgrad, label={[gradlabel]90:\tiny{$(-\!1,\!1,\!-\!1)$}}] {} --
                    (sigma1=-1_sigma2=1_sigma3=1_mu=1) node[bgrad, label={[gradlabel]90:\tiny{$(-\!1,\!1,\!1)$}}] {} --
                    (sigma1=-1_sigma2=-1_sigma3=1_mu=1) node[bgrad, label={[gradlabel]-90:\tiny{$(-\!1,\!-\!1,\!1)$}}] {} --
                    (sigma1=1_sigma2=-1_sigma3=1_mu=1) node[bgrad, label={[gradlabel]-90:\tiny{$(1,\!-\!1,\!1)$}}] {} --
                    (sigma1=1_sigma2=-1_sigma3=-1_mu=1) node[bgrad, label={[gradlabel]90:\tiny{$(1,\!-\!1,\!-\!1)$}}] {} --
                    cycle;
                \node[adgrad, label={[gradlabel]90:\tiny{$(1,\!1,\!1)$}}] at (sigma1=1_sigma2=1_sigma3=1_mu=1) {};
                \node[adgrad, label={[gradlabel]-90:\tiny{$(\!-\!1,\!-\!1,\!-\!1)$}}] at (sigma1=-1_sigma2=-1_sigma3=-1_mu=1) {};
            \end{tikzpicture}
            \caption{\(\mu = 1\)}
            \label{fig:grads-mu-one}
        \end{subfigure}\begin{subfigure}[b]{\textwidth/3}
            \centering
            \tikzsetnextfilename{example-one-grads-mu-zero}
            \begin{tikzpicture}[
                scale=.74,
                ]
                \drawgradients{0}
                \draw[clarkegrads]
                (sigma1=1_sigma2=1_sigma3=-1_mu=0) node[bgrad, label={[gradlabel, label distance=11pt]90:\tiny{$(1,\!1,\!-\!1)$}}] {} --
                (sigma1=-1_sigma2=1_sigma3=1_mu=0) node[bgrad, label={[gradlabel]90:\tiny{$(-\!1,\!1,\!1)$}}] {} --
                (sigma1=-1_sigma2=-1_sigma3=1_mu=0) node[bgrad, label={[gradlabel]-90:\tiny{$(-\!1,\!-\!1,\!1)$}}] {} --
                (sigma1=1_sigma2=-1_sigma3=-1_mu=0) node[bgrad, label={[gradlabel, label distance=11pt]90:\tiny{$(1,\!-\!1,\!-\!1)$}}] {} --
                cycle;
                \node[bgrad, label={[gradlabel]90:\tiny{$(1,\!-\!1,\!1)$}}] at (sigma1=1_sigma2=-1_sigma3=1_mu=0) {};
                \node[bgrad, label={[gradlabel, label distance=11pt]90:\tiny{$(-1,\!1,\!-\!1)$}}] at (sigma1=-1_sigma2=1_sigma3=-1_mu=0) {};
                \node[adgrad,label={[gradlabel]90:\tiny{$(1,\!1,\!1)$}}] at (sigma1=1_sigma2=1_sigma3=1_mu=0) {};
                \node[adgrad,label={[gradlabel, label distance=11pt]-90:\tiny{$(-\!1,\!-\!1,\!-\!1)$}}] at (sigma1=-1_sigma2=-1_sigma3=-1_mu=0) {};
            \end{tikzpicture}
            \caption{\(\mu = 0\)}
            \label{fig:grads-mu-zero}
        \end{subfigure}\begin{subfigure}[b]{\textwidth/3}
            \centering
            \tikzsetnextfilename{example-one-grads-mu-minus-one}
            \begin{tikzpicture}[scale=.74,]
                \drawgradients{-1}
                \draw[clarkegrads]
                (sigma1=-1_sigma2=1_sigma3=-1_mu=-1) node[bgrad, label={[gradlabel]90:\tiny{$(-\!1,\!1,\!-\!1)$}}] {} --
                (sigma1=-1_sigma2=1_sigma3=1_mu=-1) node[bgrad, label={[gradlabel]90:\tiny{$(-\!1,\!1,\!1)$}}] {} --
                (sigma1=1_sigma2=-1_sigma3=1_mu=-1) node[bgrad, label={[gradlabel]90:\tiny{$(1,\!-\!1,\!1)$}}] {} --
                (sigma1=1_sigma2=-1_sigma3=-1_mu=-1) node[bgrad, label={[gradlabel]90:\tiny{$(1,\!-\!1,\!-\!1)$}}] {} --
                cycle;
                \node[bgrad, label={[gradlabel]90:\tiny{$(1,\!1,\!-\!1)$}}] at (sigma1=1_sigma2=1_sigma3=-1_mu=-1) {};
                \node[bgrad, label={[gradlabel]-90:\tiny{$(-1,\!-\!1,\!1)$}}] at (sigma1=-1_sigma2=-1_sigma3=1_mu=-1) {};
                \node[adgrad, label={[gradlabel]90:\tiny{$(1,\!1,\!1)$}}] at (sigma1=1_sigma2=1_sigma3=1_mu=-1) {};
                \node[adgrad, label={[gradlabel]-90:\tiny{$(-\!1,\!-\!1,\!-\!1)$}}] at (sigma1=-1_sigma2=-1_sigma3=-1_mu=-1) {};
            \end{tikzpicture}
            \caption{\(\mu = -1\)}
            \label{fig:grads-mu-minus-one}
        \end{subfigure}
        \caption{Generalized derivatives of \(\varphi^\mu\) at \(\fixed{x}\) for different values of \(\mu\).
            The black dots indicate limiting gradients
            \(\bouligand{\varphi^\mu}(\fixed{x})\) and their convex hull, marked
            in gray, set of all Clarke gradients
            \(\clarke{\varphi^\mu}(\fixed{x})\).
            The red dots represent spurious gradients of smooth models
            \(\grad \varphi_{\sigma}^{\mu}(\fixed{x}) \notin
            \bouligand{\varphi}(\fixed{x})\).
        }
        \label{fig:grads-mu}
    \end{figure}
\end{example}
The example shows that MFKQ is not enough to ensure the conclusions of
Corollary~\ref{cor:bouligand}.
This is because MFCQ is only required for \(S_\sigma\) if \(\fixed{x}\in
\closure{S_\sigma}\).
LIKQ on the other hand is an algebraic condition, which by
Lemma~\ref{lem:full-rank-for-Sigma} imposes full rank conditions for all
\(\sigma \in \succmore{\fixed{\sigma}}\), independent of whether \(\fixed{x}
\in  \closure{S_\sigma}\).
These rank conditions are then used in Lemma~\ref{lem:all-essential} to proof
that \(\fixed{x} \in \closure{S_\sigma}\) indeed holds.

So far, it was shown that under LIKQ, definite choices for \(\partial
\abs{\cdot}(0)\), i.e., \(-1\) or \(1\), in the backward propagation yield
limiting gradients.
However, many AD tools use \(0\), see Table~\ref{tab:ad-tools}.
The next result justifies choices between \(-1\) and \(1\), again under the
assumption of LIKQ.
Such a choice is only valid if the respective
\(z_i[\fixed{x}]=0\), thus \(\sigma_i = 0\).
Thereby, the choice in the backward AD can only be made from \(\xi \in
\conv(\succmore{\fixed{\sigma}})\) and \(\grad \varphi_\xi(\fixed{x})\) is
computed.

\begin{theorem}\label{thm:ad-grads-are-clarke}
	For any \(\xi \in \conv(\succmore{\fixed{\sigma}})\) it holds
	\begin{equation}
        \label{eq:ad-grads-are-clarke}
		\sum_{\sigma \in \succmore{\fixed{\sigma}}} \beta_{\sigma, \xi} \grad \varphi_{\sigma}(\fixed{x})
        = \grad \varphi_{\xi}(\fixed{x})
        \text{ with }
        \beta_{\sigma, \xi} \coloneqq \prod_{i\in \fixed{\alpha}} \frac{\sigma_i \xi_i  + 1}{2}.
	\end{equation}
    In particular, this is a convex combination, and thus, under LIKQ, \(\grad
    \varphi_{\xi}(\fixed{x}) \in \clarke{\varphi}(\fixed{x})\).
\end{theorem}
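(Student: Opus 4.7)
The plan is to reduce~\eqref{eq:ad-grads-are-clarke} to a multilinear interpolation identity for the auxiliary quantity $v(\xi) \coloneqq (I - L\Xi - M)^{-T}(\Xi b + d)$. As a preliminary, I would verify that the $\beta_{\sigma,\xi}$ form convex-combination coefficients: each factor $(\sigma_i \xi_i + 1)/2$ lies in $[0,1]$ because $\sigma_i \in \{-1,1\}$ and $\xi_i \in [-1,1]$, so $\beta_{\sigma,\xi} \geq 0$; and summing over $\succmore{\fixed{\sigma}}$ factorises along $\fixed{\alpha}$ as $\sum_\sigma \beta_{\sigma,\xi} = \prod_{i \in \fixed{\alpha}} \bigl((\xi_i+1)/2 + (-\xi_i+1)/2\bigr) = 1$.

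For the equality in~\eqref{eq:ad-grads-are-clarke}, I would first observe that $\Xi \fixed{z} = \abs{\fixed{z}}$ for every $\xi \in \conv(\succmore{\fixed{\sigma}})$, since coordinates $i \notin \fixed{\alpha}$ satisfy $\xi_i = \fixed{\sigma}_i$ while those in $\fixed{\alpha}$ have $\fixed{z}_i = 0$. Hence $\fixed{z}$ solves the modified switching equation~\eqref{eq:fixed-switching} at $\fixed{x}$, and chain rule together with the implicit function theorem give
\[
\grad \varphi_\xi(\fixed{x}) = a + Z^T (I - L\Xi - M)^{-T}(\Xi b + d) = a + Z^T v(\xi),
\]
and the same formula with $\xi = \sigma$ yields $\grad \varphi_\sigma(\fixed{x}) = a + Z^T v(\sigma)$. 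Since $\sum_\sigma \beta_{\sigma,\xi} = 1$, the $a$ contribution cancels and the theorem reduces to $v(\xi) = \sum_\sigma \beta_{\sigma,\xi}\, v(\sigma)$.

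The crux of the argument is that $v$ is multilinear in $(\xi_1, \ldots, \xi_s)$. Here, the strict lower triangularity of $L$ and $M$ becomes essential: $(I - L\Xi - M)^T$ is upper triangular with unit diagonal, so back-substitution yields
\[
v_i = \xi_i\Bigl(b_i + \sum_{j > i} L_{ji} v_j\Bigr) + \Bigl(d_i + \sum_{j > i} M_{ji} v_j\Bigr),
\]
which is affine in $\xi_i$ with coefficients involving only $v_{i+1}, \ldots, v_s$. Backward induction from $i = s$ down to $i = 1$ then shows that $v_i$ is multilinear in $\xi_i, \ldots, \xi_s$, and thus $v$ is multilinear in $(\xi_i)_{i \in \fixed{\alpha}}$ once the other components are frozen at $\fixed{\sigma}_i$. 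The polynomials $\prod_{i \in \fixed{\alpha}}(1 + \sigma_i \xi_i)/2$ for $\sigma \in \succmore{\fixed{\sigma}}$ form the Lagrange basis for multilinear interpolation on the hypercube $\{-1,1\}^{\fixed{\alpha}}$ (each equals $1$ at $\xi = \sigma$ and vanishes at every other corner), so $v(\xi) = \sum_\sigma \beta_{\sigma,\xi} v(\sigma)$ follows.

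The ``in particular'' claim is then immediate: each $\grad \varphi_\sigma(\fixed{x})$ lies in $\bouligand{\varphi}(\fixed{x}) \subseteq \clarke{\varphi}(\fixed{x})$ by Corollary~\ref{cor:bouligand}, and convex combinations of elements of the convex set $\clarke{\varphi}(\fixed{x})$ remain inside. The main obstacle is the multilinearity step; without strict lower triangularity, products such as $\Xi L \Xi$ would contribute $\xi_i^2$ terms in the Neumann expansion of $(I - L\Xi - M)^{-1}$, and the clean Lagrange interpolation formula would break down.
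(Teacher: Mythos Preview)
Your proposal is correct and structurally parallel to the paper's proof: both write \(\grad \varphi_\xi(\fixed{x}) = a + Z^T y_\xi\) with \(y_\xi \coloneqq (I - L\Xi - M)^{-T}(\Xi b + d)\) (your \(v(\xi)\)), reduce the claim to \(\sum_\sigma \beta_{\sigma,\xi}\, y_\sigma = y_\xi\), and establish this via the back-substitution recursion coming from the strict lower triangularity of \(L\) and \(M\), by backward induction on the component index. The only difference is in the packaging of that induction. The paper verifies the identity component by component, which forces a separate and somewhat lengthy computation of the cross term \(\sum_\sigma \beta_{\sigma,\xi}\,\sigma_l\, y_{\sigma,k}\) for \(k > l\); you instead observe once that \(\xi \mapsto y_\xi\) is multilinear in \((\xi_i)_{i \in \fixed{\alpha}}\) and then invoke the Lagrange interpolation formula on the hypercube \(\{-1,1\}^{\fixed{\alpha}}\), for which the \(\beta_{\sigma,\xi}\) are precisely the nodal basis functions. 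Your route is shorter and makes the role of the coefficients \(\beta_{\sigma,\xi}\) transparent, but the two arguments are the same in substance.
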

\begin{proof}
    First observe that \(\beta_{\sigma, \xi} \geq 0 \) are convex coefficients
    because
	\begin{equation}
		\label{eq:prod_without_sigma}
		1
        = \prod_{i\in \fixed{\alpha}} 1
        = \prod_{i\in \fixed{\alpha}} \left(\frac{\xi_i +1}{2} + \frac{-\xi_i +1}{2}\right)
        = \sum_{\sigma \in \succmore{\fixed{\sigma}}} \prod_{i\in \fixed{\alpha}} \frac{\sigma_i \xi_i  + 1}{2}
        = \sum_{\sigma \in \succmore{\fixed{\sigma}}} \beta_{\sigma, \xi}.
	\end{equation}
    For \(k \notin \fixed{\alpha}\) it follows
    from~\eqref{eq:prod_without_sigma} and \(\xi_k = \fixed{\sigma}_k =
    \sigma_k\) for all \(\sigma \in \succmore{\fixed{\sigma}}\) that
	\begin{equation}
		\label{eq:prod_with_sigma}
		\sum_{\sigma \in \succmore{\fixed{\sigma}}} \sigma_k \beta_{\sigma, \xi} = \xi_k.
	\end{equation}
    For \(k \in \fixed{\alpha}\) the same holds true due to
	\begin{align*}
		\sum_{\sigma \in \succmore{\fixed{\sigma}}} \sigma_k \prod_{i\in \fixed{\alpha}} \frac{\sigma_i \xi_i  + 1}{2}
		&=
		\sum_{\sigma \in \succmore{\fixed{\sigma}}} \sigma_k \frac{\sigma_k\xi_k+1}{2}\prod_{\substack{i\in \fixed{\alpha} \\ i\neq k}} \frac{\sigma_i \xi_i  + 1}{2}
		\\
		&= \frac{\xi_k+1}{2} \sum_{\substack{\sigma \in \succmore{\fixed{\sigma}} \\ \sigma_k = 1}} \prod_{\substack{i\in 	\fixed{\alpha} \\ i\neq k}} \frac{\sigma_i \xi_i  + 1}{2}
		  +\frac{\xi_k-1}{2} \sum_{\substack{\sigma \in \succmore{\fixed{\sigma}} \\ \sigma_k = -1}} \prod_{\substack{i\in 	\fixed{\alpha} \\ i\neq k}} \frac{\sigma_i \xi_i  + 1}{2} \\
		&= \paren*{\frac{\xi_k+1}{2} + \frac{\xi_k-1}{2}} \prod_{\substack{i\in 	\fixed{\alpha} \\ i\neq k}} \left(\frac{\xi_i +1}{2} + \frac{-\xi_i +1}{2}\right)
= \xi_k.
	\end{align*}
    With \(\Sigma \coloneqq \diag(\sigma)\), \(\Xi \coloneqq \diag(\xi)\), now
    define
	\begin{equation}\label{eq:def-y-sigma-xi}
        \begin{aligned}
 		    y_\sigma &\coloneqq (I-M-L\Sigma)^{-T}(\Sigma b + d), \\
 		    y_{\xi} &\coloneqq (I-M-L\Xi)^{-T}(\Xi b + d),
        \end{aligned}
	\end{equation}
    whereby
    \(\grad \varphi_\sigma(\fixed{x}) = a + Z^T y_\sigma\)
    and
    \(\grad \varphi_\xi(\fixed{x}) = a + Z^T y_\xi\).
    Due to the strict lower triangular structure of \(M\) and \(L\) this means
	\begin{equation}\label{eq:y_k}
        \begin{aligned}
            y_{\sigma, k} &= \sigma_kb_k + d_k + \sum_{j = k+1}^{s} (M_{jk}y_{\sigma, j} + L_{jk}\sigma_ky_{\sigma, j}), \\
            y_{\xi, k} &= \xi_kb_k + d_k + \sum_{j = k+1}^{s} (M_{jk}y_{\xi, j} + L_{jk}\xi_ky_{\xi, j}),
        \end{aligned}
	\end{equation}
	for \(k = 1, \ldots, s\).
    In the following an induction over \(k=s, \ldots, 1\) shows that
	\begin{equation}
		\label{eq:y_identity_k}
		\sum_{\sigma \in \succmore{\fixed{\sigma}}}  \beta_{\sigma, \xi} y_{\sigma, k}  = y_{\xi, k}.
	\end{equation}
    For \(k=s\) the sum in~\eqref{eq:y_k} vanishes
    and~\eqref{eq:prod_without_sigma} and~\eqref{eq:prod_with_sigma} yield
	\begin{align*}
		\sum_{\sigma \in \succmore{\fixed{\sigma}}} \beta_{\sigma, \xi} y_{\sigma,s}
= b_s \sum_{\sigma \in \succmore{\fixed{\sigma}}}  \sigma_s   \beta_{\sigma, \xi}
		+
		d_s \sum_{\sigma \in \succmore{\fixed{\sigma}}} \beta_{\sigma, \xi}
		= \xi_s b_s + d_s
		= y_{\xi,s}.
	\end{align*}
    For the induction step assume that~\eqref{eq:y_identity_k} holds for all
    \(k > l \in \{1,\ldots, s-1\}\).
    Again using~\eqref{eq:y_k}~\eqref{eq:prod_without_sigma}
    and~\eqref{eq:prod_with_sigma} and the induction hypothesis yields
	\begin{align}
		\sum_{\sigma \in \succmore{\fixed{\sigma}}} \beta_{\sigma, \xi} y_{\sigma, l}
        &= \sum_{\sigma \in \succmore{\fixed{\sigma}}} \beta_{\sigma, \xi} \paren[\Bigg]{\sigma_l b_l + d_l + \sum_{j=l+1}^s  (M_{jl} y_{\sigma, j} + L_{jl} \sigma_l y_{\sigma, j})}\\
&= b_l \xi_l + d_l + \sum_{j=l+1}^s  \paren[\Bigg]{M_{jl} y_{\xi,j} + L_{jl} \sum_{\sigma \in \succmore{\fixed{\sigma}}} \beta_{\sigma, \xi} \sigma_l y_{\sigma, j}} \\
        &= y_{\xi, l} + \sum_{j=l+1}^s L_{jl} \paren[\Bigg]{\sum_{\sigma \in \succmore{\fixed{\sigma}}} \beta_{\sigma, \xi} \sigma_l y_{\sigma, j} - \xi_l y_{\xi, j}}.
	\end{align}
    It suffices to show that for \(k > l\)
    \begin{equation}
        \sum_{\sigma \in \succmore{\fixed{\sigma}}} \beta_{\sigma, \xi} \sigma_l y_{\sigma, k}
        = \xi_l y_{\xi, k}.
    \end{equation}
    For \(l \notin \fixed{\alpha}\) this follows immediately from the
    induction hypothesis~\eqref{eq:y_identity_k} because of \(\sigma_l =
    \fixed{\sigma}_l = \xi_l\) for all \(\sigma \in
    \succmore{\fixed{\sigma}}\).
    For \(l \in \fixed{\alpha}\), first observe that \(y_{\sigma, k} \) is
    independent of \(\sigma_l\), see~\eqref{eq:y_k}, and thus
    \begin{equation}
        \sum_{\substack{\sigma \in \succmore{\fixed{\sigma}}\\\sigma_l = 1}}
        \prod_{\substack{i\in \fixed{\alpha}\\ i \neq l}} \frac{\sigma_i \xi_i  + 1}{2} y_{\sigma, k}
        = \sum_{\substack{\sigma \in \succmore{\fixed{\sigma}}\\\sigma_l = -1}}
        \prod_{\substack{i\in \fixed{\alpha}\\ i \neq l}} \frac{\sigma_i \xi_i  + 1}{2} y_{\sigma, k}.
    \end{equation}
    Therefore, separating out the factor corresponding to \(i = l\) and
    splitting the sum over \(\succmore{\fixed{\sigma}}\) dependent on
    \(\sigma_l\) provides
	\begin{align}
		\sum_{\sigma \in \succmore{\fixed{\sigma}}} \beta_{\sigma, \xi} \sigma_l  y_{\sigma, k}
		&= \sum_{\sigma \in \succmore{\fixed{\sigma}}} \sigma_l\left(\frac{\sigma_l \xi_l + 1}{2}\right)  \prod_{\substack{i\in \fixed{\alpha}\\ i \neq l}} \frac{\sigma_i \xi_i  + 1}{2} y_{\sigma, k}  \\
		&= \sum_{\substack{\sigma \in \succmore{\fixed{\sigma}}\\\sigma_l = 1}} \left(\frac{\xi_l + 1}{2}\right) \prod_{\substack{i\in \fixed{\alpha}\\ i \neq l}} \frac{\sigma_i \xi_i  + 1}{2} y_{\sigma, k}
		 + \sum_{\substack{\sigma \in \succmore{\fixed{\sigma}}\\\sigma_l = -1}}\left(\frac{\xi_l -1}{2}\right)  \prod_{\substack{i\in \fixed{\alpha}\\ i \neq l}} \frac{\sigma_i \xi_i  + 1}{2} y_{\sigma, k} \\
		&= \sum_{\substack{\sigma \in \succmore{\fixed{\sigma}}\\\sigma_l = 1}} \left(\frac{\xi_l + 1}{2} + \frac{\xi_l -1}{2}\right) \prod_{\substack{i\in \fixed{\alpha}\\ i \neq l}} \frac{\sigma_i \xi_i  + 1}{2} y_{\sigma, k}.
	\end{align}
    Next, using the algebraic identity
    \begin{equation}
        \frac{\xi_l + 1}{2} + \frac{\xi_l - 1}{2}
        = \xi_l
        = \xi_l \paren*{\frac{\xi_l + 1}{2} + \frac{- \xi_l + 1}{2}},
    \end{equation}
    recombining the original sum over \(\succmore{\fixed{\sigma}}\),
    and finally applying the induction hypothesis yields
    \begin{align}
		\sum_{\sigma \in \succmore{\fixed{\sigma}}} \beta_{\sigma, \xi} \sigma_l  y_{\sigma, k}
		&= \xi_l \sum_{\substack{\sigma \in \succmore{\fixed{\sigma}}\\\sigma_l = 1}} \paren*{\frac{\xi_l + 1}{2} + \frac{- \xi_l + 1}{2}} \prod_{\substack{i\in \fixed{\alpha}\\ i \neq l}} \frac{\sigma_i \xi_i  + 1}{2} y_{\sigma, k} \\
&= \xi_l \sum_{\substack{\sigma \in \succmore{\fixed{\sigma}}\\\sigma_l = 1}}  \prod_{i\in \fixed{\alpha}} \frac{\sigma_i \xi_i + 1}{2} y_{\sigma, k}
         + \xi_l \sum_{\substack{\sigma \in \succmore{\fixed{\sigma}}\\\sigma_l = -1}} \prod_{i\in \fixed{\alpha}} \frac{\sigma_i \xi_i + 1}{2} y_{\sigma, k} \\
        &= \xi_l \sum_{\sigma \in \succmore{\fixed{\sigma}}} \beta_{\sigma, \xi} y_{\sigma,k}
        = \xi_l y_{\xi,k},
    \end{align}
    which concludes the induction.

    By writing \(\grad \varphi_{\xi}(\fixed{x})\) in terms of \(y_{\xi}\) and
    \(\grad \varphi_{\sigma}(\fixed{x})\) in terms of \(y_{\sigma}\), with the
    definitions in~\eqref{eq:def-y-sigma-xi}, the relation
    in~\eqref{eq:y_identity_k} finally shows
	\begin{align}
		 \grad \varphi_{\xi}(\fixed{x})
= a + Z^T y_{\xi}
		 = \sum_{\sigma \in \succmore{\fixed{\sigma}}} \beta_{\sigma, \xi} ( a + Z^T y_{\sigma, k})
= \sum_{\sigma \in \succmore{\fixed{\sigma}}} \beta_{\sigma, \xi} \grad \varphi_{\sigma}(\fixed{x}).
	\end{align}
    Under LIKQ Corollary~\ref{cor:bouligand} ensures \(\grad
    \varphi_{\sigma}(\fixed{x}) \in \bouligand{\varphi}(\fixed{x})\).
    Thereby, \(\grad \varphi_{\xi}(\fixed{x})\) is a convex combination of
    limiting gradients and thus a Clarke gradient.
\end{proof}

\section{Generalized Derivatives in Deep Learning}
\label{sec:nns}
Although having non-smooth training problems, ReLU activated (deep) feed
forward neural networks have attained huge popularity in recent years.
Since \(\relu(x) = \tfrac12(x + \abs{x})\), the non-smoothness stems from the
absolute value, hence said training problems are of class \(\Cabs\).
This section concerns the computation of generalized derivatives for training
problems of feed forward ReLU networks using AD.

Let \(J, N, M\in\nats\). For \(j=1, \ldots, J\) let \((u^{[j]},v^{[j]}) \in
\reals^N \times \reals^M\) be samples and corresponding labels.
Let \(T\in \nats\) be the number of hidden layers and \(N_t\in \nats\), \(t=1,
\ldots T\), the number of neurons in the \(t\)-th layer as well as
\(N_0\coloneqq N\) and \(N_{T+1}\coloneqq M\).

For \(t=1, \ldots, T+1\), the hidden layers and the output layer, define
weights \(W^{(t)} \in \reals^{N_t \times  N_{t-1}}\) as well as biases
\(b^{(t)} \in \reals^{N_t}\).
To evaluate the network function \(\operatorname{net}(u;
(\mathbf{W},\mathbf{b}))\) for a data point \(u^{(0)} \coloneqq u \in
\reals^N\) with
\begin{align}
	\mathbf{W} &\coloneqq (W^{(1)}, \ldots, W^{(T+1)}) \in \reals^{N_1 \times N_0}\times \cdots \times \reals^{N_{T+1} \times N_{T}} \eqqcolon \mathbb{W}, \\
	\mathbf{b} &\coloneqq (b^{(1)}, \ldots, b^{(T+1)}) \in \reals^{N_1}\times \cdots \times \reals^{N_{T+1}} \eqqcolon \mathbb{B}
\end{align}
compute for \(t=1, \ldots, T\),
\begin{equation}
	z^{(t)} \coloneqq W^{(t)}u^{(t-1)} + b^{(t)} , \quad
	u^{(t)} \coloneqq \relu( z^{(t)}  ) = \tfrac12( z^{(t)} + \abs{z^{(t)}}  ),
\end{equation}
and return \(u^{(T+1)} \coloneqq  h(W^{(T+1)}u^{(T)} + b^{(T+1)})\), where \(h
\colon \reals^{N_{T+1}} \to \reals^{M}\) is a smooth function, e.g., softmax.
The training problem, using a function \(\operatorname{loss}: \reals^M
\times \reals^M \to \reals\), is
\begin{equation}
	\min_{	\mathbf{W} \in \mathbb{W},	\mathbf{b} \in \mathbb{B} }
	\frac{1}{J}\sum_{j=1}^J \varphi^{[j]}(\mathbf{W},\mathbf{b}),
\end{equation}
where
\begin{equation}
	\varphi^{[j]}(\mathbf{W},\mathbf{b})
    \coloneqq \operatorname{loss}(\operatorname{net}(u^{[j]},(\mathbf{W},\mathbf{b}) ), v^{[j]}).
\end{equation}
Training problems in this form can be treated with a stochastic subgradient
method.
In the simplest case, a Clarke gradient of
\(\varphi^{[j]}(\mathbf{W},\mathbf{b})\) for a single \(j\in \set{1,\ldots, J}\)
is required for each update.
With auxiliary variables \(\mathbf{z}=(z^{(1)}, \ldots, z^{(T)}) \in
\reals^{N_1} \times \cdots \times \reals^{N_T}\), the function \(\varphi^{[j]}\)
can be formulated in the form of a \(\Cabs[2, s]\) function with \(s \coloneqq
N_1 + \ldots + N_T\) by the following definitions
\begin{align}
	f^{[j]}((\mathbf{W},\mathbf{b}),\abs{\mathbf{z}},\mathbf{z})
    &\coloneqq \operatorname{loss}( h(\tfrac{1}{2} W^{(T+1)}(z^{(T)}+ \abs{z^{(T)}}) + b^{(T+1)}) , v^{[j]}) \\[.4em]
	c^{[j]}((\mathbf{W},\mathbf{b}),\abs{\mathbf{z}},\mathbf{z})
    &\coloneqq
	\begin{pmatrix}
		W^{(1)}u^{[j]} + b^{(1)}  \\
		\tfrac12W^{(2)} (z^{(1)}+\abs{z^{(1)}}) + b^{(2)} \\
		\vdots \\
		\tfrac12W^{(T)} (z^{(T-1)}+\abs{z^{(T-1)}}) + b^{(T)}
	\end{pmatrix}.
\end{align}
This yields the following derivatives
\begin{equation}
	\label{eq:ZML_for_nn}
	Z^{[j]} =
	\begin{bmatrix}
		\dd{\mathbf{W}} c^{[j]}((\mathbf{W},\mathbf{b}),\abs{\mathbf{z}},\mathbf{z}) & I_s 	\end{bmatrix} ,
	\quad
	M^{[j]} = L^{[j]} =
	\frac12\begin{bmatrix}
		0& 0 &0 & \hdots & 0 \\
		W^{(2)} & 0 &0 & \hdots &0\\
		0 & W^{(3)} & 0 & \hdots &0 \\
		\vdots & \ddots & \ddots & \ddots & \vdots \\
		0 & \ldots & 0 & W^{(T)} & 0
	\end{bmatrix},
\end{equation}
where in the first term, the derivative should be with respect to the flattened
version of the weight matrix in order to obtain a matrix instead of a tensor.
Independent of this term and for all \((\mathbf{W},\mathbf{b})\) the considered
problem has LIKQ because \(Z^{[j]}\) has full row rank due to the identity
\(I_s\).
Thereby, the theory in the above sections applies and arbitrary choices for
\(\partial \abs{\cdot}(0)\) in the backward propagation yield Clarke
generalized gradients.

In practice, usually batch subgradients of
\begin{equation}
	\label{eq:batch_subgrad}
	\frac{1}{\abs{\mathcal{J}}}\sum_{j\in \mathcal{J}} \varphi^{[j]}(\mathbf{W},\mathbf{b}),
\end{equation}
with \(\mathcal{J} \subseteq \set{1, \ldots, J}\), are desired.
Unfortunately, LIKQ is in general not satisfied anymore in this setting, e.g.,
in the case where there are more active indices than trainable weights.
This means that choosing \(\partial \abs{\cdot}(0)\) arbitrary for every
absolute value and every sample in the batch does not yield a Clarke gradient.
In the next theorem it will be shown that for sums of terms sharing an
abs-normal form of particular structure, consistent choices yield generalized
gradients.
This structure applies to feed forward neural networks and these consistent
choices are naturally made in practical AD tools.
\begin{theorem}\label{thm:batch_nns}
    Let \(J,n, s\in \nats\), \(\mathcal{J} \subseteq \set{1, \ldots, J}\) and
    \(\fixed{x}\in \reals^n\).
    For \(j\in\mathcal{J}\), let \(\varphi^{[j]}\in \Cabs[2, s](\reals^n)\) be
    represented by an evaluation procedure \((f^{[j]}, c^{[j]})\) with solution
    operator \(z^{[j]}[\cdot]\), \(\fixed{z}^{[j]} \coloneqq
    z^{[j]}[\fixed{x}]\) and \(\fixed{\sigma}^{[j]} \coloneqq
    \sign(\fixed{z}^{[j]}) \in \set{-1,0,1}^s \).
    If there exists a common right-inverse \(Z^\pinv\) for all \(Z^{[j]}\), it
    holds for all \(\tau = \set{-1,1}^s\) that
	\begin{equation}
		\frac{1}{\abs{\mathcal{J}}} \sum_{j\in \mathcal{J}} \grad \varphi^{[j]}_{\sigma^{[j]}_\tau} (\fixed{x}) \in \bouligand{\paren[\Big]{\frac{1}{\abs{\mathcal{J}}}\sum_{j\in \mathcal{J}} \varphi^{[j]} }}(\fixed{x}) \text{ where }
		(\sigma^{[j]}_\tau)_k \coloneqq
		\begin{cases}
			\tau_k &\text{ if } \fixed{\sigma}_k^{[j]}  = 0\\
			\fixed{\sigma}_k^{[j]} &\text{ else.}
		\end{cases}
	\end{equation}
	Moreover, for \(\zeta \in [-1,1]^s\) it holds,
	\begin{equation}
		\frac{1}{\abs{\mathcal{J}}}\sum_{j\in \mathcal{J}} \grad \varphi^{[j]}_{\xi^{[j]}_\zeta} (\fixed{x}) \in \clarke{\paren[\Big]{\frac{1}{\abs{\mathcal{J}}}\sum_{j\in \mathcal{J}} \varphi^{[j]} }}(\fixed{x}) \text{ where }
		(\xi^{[j]}_\zeta)_k \coloneqq
		\begin{cases}
			\zeta_k &\text{ if } \fixed{\sigma}_k^{[j]}  = 0 \\
			\fixed{\sigma}_k^{[j]} &\text{ else.}
		\end{cases}
	\end{equation}
\end{theorem}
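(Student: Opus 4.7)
The strategy is to first establish the limiting-gradient claim by a simultaneous perturbation argument that exploits the common right-inverse $Z^\pinv$, and then derive the Clarke-gradient claim by applying Theorem~\ref{thm:ad-grads-are-clarke} to each summand individually. For fixed $\tau \in \set{-1,1}^s$, the plan is to construct a single direction $d_\tau \coloneqq Z^\pinv v_\tau \in \reals^n$ such that $\fixed x + \varepsilon d_\tau \in S^{[j]}_{\sigma^{[j]}_\tau}$ simultaneously for every $j \in \mathcal{J}$ and every sufficiently small $\varepsilon > 0$. This is possible because the common right-inverse forces $Z^{[j]} d_\tau = v_\tau$ independently of $j$, and because $\Sigma^{[j]}_\tau \fixed z^{[j]} = \abs{\fixed z^{[j]}}$ implies $z^{[j]}_{\sigma^{[j]}_\tau}[\fixed x] = \fixed z^{[j]}$. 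Taylor expansion of the smooth $z^{[j]}_{\sigma^{[j]}_\tau}[\cdot]$ at $\fixed x$ then gives
\[
    z^{[j]}_{\sigma^{[j]}_\tau}[\fixed x + \varepsilon d_\tau]
    = \fixed z^{[j]} + \varepsilon\, y^{(j)} + \mathcal{O}(\varepsilon^2),
    \qquad
    y^{(j)} \coloneqq \paren{I_s - L^{[j]}\Sigma^{[j]}_\tau - M^{[j]}}^{-1} v_\tau,
\]
and, mirroring Lemma~\ref{lem:all-essential}, the remaining sign constraints reduce to $\tau_k y^{(j)}_k > 0$ for every $j \in \mathcal{J}$ and every $k \in \fixed\alpha^{[j]}$.

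To realize these constraints, I would use the strict lower-triangular structure of $L^{[j]}$ and $M^{[j]}$: forward substitution yields $y^{(j)}_k = (v_\tau)_k + \sum_{l<k}\paren{L^{[j]}_{kl}(\sigma^{[j]}_\tau)_l + M^{[j]}_{kl}} y^{(j)}_l$, and $v_\tau$ can be built inductively. Once $(v_\tau)_1,\ldots,(v_\tau)_{k-1}$ are fixed the values $y^{(j)}_1,\ldots,y^{(j)}_{k-1}$ are determined for every $j$, and setting $(v_\tau)_k \coloneqq \tau_k(1 + R_k)$ with $R_k$ exceeding $\max_{j \in \mathcal{J}} \abs{\sum_{l<k}\paren{L^{[j]}_{kl}(\sigma^{[j]}_\tau)_l + M^{[j]}_{kl}} y^{(j)}_l}$ yields $\tau_k y^{(j)}_k \geq 1$ uniformly in $j$. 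Finiteness of $\mathcal{J}$ makes the maximum well defined. Lemma~\ref{lem:consitent-solution} then forces each $\varphi^{[j]}$ to be smooth at $\fixed x + \varepsilon d_\tau$ with $\grad \varphi^{[j]} = \grad \varphi^{[j]}_{\sigma^{[j]}_\tau}$ there, so the batch average is also smooth with gradient equal to the corresponding average of gradients. Letting $\varepsilon \downarrow 0$ delivers the first claim.

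For the Clarke claim, the existence of $Z^\pinv$ forces every $Z^{[j]}$ to have full row rank $s$, so that $P_{\fixed\alpha^{[j]}}\paren{I_s - L^{[j]}\fixed\Sigma^{[j]} - M^{[j]}}^{-1} Z^{[j]}$ has full row rank $\abs{\fixed\alpha^{[j]}}$; each individual $\varphi^{[j]}$ therefore satisfies LIKQ at $\fixed x$. Theorem~\ref{thm:ad-grads-are-clarke} applied per sample with $\xi = \xi^{[j]}_\zeta$ writes $\grad \varphi^{[j]}_{\xi^{[j]}_\zeta}(\fixed x)$ as a convex combination of $\grad \varphi^{[j]}_\sigma(\fixed x)$ over $\sigma \in \succmore{\fixed\sigma^{[j]}}$. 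Re-parametrizing this sum over the finer index set $\tau \in \set{-1,1}^s$ via $\sigma = \sigma^{[j]}_\tau$ and using that for each fixed $\sigma$ the free factors $\tfrac{\tau_i \zeta_i + 1}{2}$ at inactive indices $i \notin \fixed\alpha^{[j]}$ sum to $1$, produces the $j$-independent convex coefficients $\beta_\tau \coloneqq \prod_{i=1}^s \tfrac{\tau_i \zeta_i + 1}{2}$ with $\grad \varphi^{[j]}_{\xi^{[j]}_\zeta}(\fixed x) = \sum_\tau \beta_\tau \grad \varphi^{[j]}_{\sigma^{[j]}_\tau}(\fixed x)$. Averaging over $\mathcal{J}$ and exchanging sums expresses $\tfrac{1}{\abs{\mathcal{J}}}\sum_j \grad \varphi^{[j]}_{\xi^{[j]}_\zeta}(\fixed x)$ as a convex combination of the limiting gradients from the first part, i.e., as a Clarke gradient of the batch average.

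The main obstacle is the simultaneous construction of $d_\tau$: the common right-inverse is precisely the hypothesis that decouples the input-space direction from the $j$-dependence of $Z^{[j]}$, and after this decoupling the strict lower-triangular structure reduces the finitely many active-index sign constraints to the scalar inductive choice above. The Clarke step is then essentially bookkeeping once the $\tau$-parametrization is aligned across samples, the critical point being that the inactive-index factors telescope to $1$ so that the convex coefficients $\beta_\tau$ do not depend on $j$.
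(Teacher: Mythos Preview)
Your proposal is correct and follows essentially the same route as the paper: the identical inductive construction of $v_\tau$ via the strict lower-triangular forward substitution to obtain a single direction $d_\tau = Z^\pinv v_\tau$ that works for all $j$, and the same re-parametrization of the convex-combination identity of Theorem~\ref{thm:ad-grads-are-clarke} over $\tau \in \set{-1,1}^s$ with the $j$-independent coefficients $\prod_{i=1}^s \tfrac{\tau_i\zeta_i+1}{2}$. Your remark that each $\varphi^{[j]}$ individually satisfies LIKQ is true but unnecessary, since you only use the identity part of Theorem~\ref{thm:ad-grads-are-clarke} and then conclude via the first part rather than via Corollary~\ref{cor:bouligand}.
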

\begin{proof}
    Let \(\Sigma^{[j]}_\tau \coloneqq \diag(\sigma^{[j]}_\tau)\) and define \(v \in \reals^s\)
    inductively by \(v_1 \coloneqq \tau_1\) and
	\begin{equation}\label{eq:def-vk}
		v_k \coloneqq \tau_k\paren*{  1+\max_{j\in \set{1, \ldots, J}} \sum_{l=1}^{k-1} \abs{M_{kl}^{[j]} + L_{kl}^{[j]}(\sigma^{[j]}_\tau)_l} \abs{(I -M^{[j]} - L^{[j]}\Sigma^{[j]}_\tau)^{-1}v}_l }.
	\end{equation}
    Notice that this is well-defined because \(\paren{(I -M^{[j]} -
    L^{[j]}\Sigma^{[j]}_\tau)^{-1}v}_l\) only depends on the first \(l\)
    components of \(v\) due to the lower triangular structure of \(L^{[j]}\)
    and \(M^{[j]}\).
    Thereby, with \(d_\tau \coloneqq Z^\pinv v\) it holds for all \(j \in
    \mathcal{J}\) that
	\begin{align}
		\jac z_{\sigma^{[j]}_\tau}^{[j]}[\fixed{x}]d_\tau
		&= (I -M^{[j]} - L^{[j]}\Sigma^{[j]}_\tau)^{-1}Z^{[j]}Z^\pinv v \\
		&= (I -M^{[j]} - L^{[j]}\Sigma^{[j]}_\tau)^{-1}v.
	\end{align}
    By back substitution and~\eqref{eq:def-vk}, this yields for \(k=1, \ldots,
    s\) and all \(j\in \mathcal{J}\) that
    \begin{align}
		\tau_k (Dz_{\sigma^{[j]}}^{[j]}[\fixed{x}]d_\tau)_k 
		&= \tau_k v_k + \tau_k\sum_{l=1}^{k-1} \paren{M_{kl}^{[j]} + L_{kl}^{[j]}(\sigma^{[j]}_\tau)_l} \paren{(I -M^{[j]} - L^{[j]}\Sigma^{[j]}_\tau)^{-1}v}_l \\
		&\geq \tau_k v_k - \sum_{l=1}^{k-1} \abs{M_{kl}^{[j]} + L_{kl}^{[j]}(\sigma^{[j]}_\tau)_l} \abs{(I -M^{[j]} - L^{[j]}\Sigma^{[j]})^{-1}v}_l \\
		&\geq 1.
	\end{align}
    Hence, \(\sign((Dz_{\sigma^{[j]}_\tau}^{[j]}[\fixed{x}]d_\tau)_k) = \tau_k\)
    for all \(j\in \mathcal{J}\).
    Now define \(x_{\varepsilon, \tau}\coloneqq \fixed{x} + \varepsilon
    d_\tau\).
    By Taylor's theorem and Lemma~\ref{lem:consitent-solution} it holds for all
    \(j\in \mathcal{J}\) that
	\begin{equation}
		z^{[j]}[x_{\varepsilon, \tau}]
        = \fixed{z}^{[j]} + \varepsilon Dz_{\sigma^{[j]}_\tau}^{[j]}[\fixed{x}]d_\tau + \mathcal{O}(\varepsilon^2).
	\end{equation}
    Therefore, with the same arguments as in the proof of
    Lemma~\ref{lem:all-essential}, \(\sign(z^{[j]}[x_{\varepsilon, \tau}]) =
    \sigma^{[j]}_\tau \in  \set{-1,1}^s\), for all \(\varepsilon>0\) sufficiently
    small.
    Therefore, for all \(j \in \mathcal{J}\), it holds that \(\varphi^{[j]}\) is
    smooth at \(x_{\varepsilon, \tau}\) for \(\varepsilon > 0\) sufficiently
    small, hence
    \begin{equation}
        \grad \varphi^{[j]}(x_{\varepsilon, \tau})
        = \grad \varphi_{\sigma^{[j]}_\tau}^{[j]}(x_{\varepsilon, \tau}).
    \end{equation}
    Taking the sum over \(j\) and then the limit \(\varepsilon \downto 0\) yields
    the first desired identity.

    For the second identity fix \(j \in \mathcal{J}\) and note that
    \(\sigma^{[j]}_\tau \in \succmore{\fixed{\sigma}^{[j]}}\) for all \(\tau\in
    \set{-1,1}^s\). Now define the convex coefficients
	\begin{align*}
		\gamma_{\tau, \zeta} \coloneqq \prod_{i=1}^s \frac{\tau_i \zeta_i  + 1}{2}
	\end{align*}
    for all \(\tau \in \set{-1,1}^s\) and fixed \(\zeta \in [-1,1]^s\). By definition it holds \((\sigma^{[j]}_\tau)_i = \tau_i\) for all \(i\in \alpha^{[j]}\).
    Thereby, using that every $\tau \in \set{-1,1}^s$ has exactly one $ \sigma^{[j]} \in \succmore{\fixed{\sigma}^{[j]}}$ with $ \sigma_i^{[j]}=\tau_i=(\sigma^{[j]}_\tau)_i$  for all  $i\in \alpha^{[j]}$, it holds
    \begin{align}
    	\sum_{\tau \in \set{-1,1}^s}
        \gamma_{\tau, \zeta} \grad \varphi^{[j]}_{\sigma^{[j]}_\tau} (\fixed{x})
        \quad
    	&=
	    \sum_{\sigma^{[j]} \in \succmore{\fixed{\sigma}^{[j]}}}
	    \sum_{\substack{\tau \in \set{-1,1}^s\\ \sigma_i^{[j]} = \tau_i \text{ for } i\in \alpha^{[j]}}}
        \gamma_{\tau, \zeta}  \grad \varphi^{[j]}_{\sigma^{[j]}_\tau} (\fixed{x})\\
    	&=
	    \sum_{\sigma^{[j]} \in \succmore{\fixed{\sigma}^{[j]}}}
	    \sum_{\substack{\tau \in \set{-1,1}^s\\ \sigma_i^{[j]} = \tau_i \text{ for } i\in \alpha^{[j]}}}
        \gamma_{\tau, \zeta}  \grad \varphi^{[j]}_{\sigma^{[j]}} (\fixed{x}).
    \end{align}
    Altogether, using in addition Theorem~\ref{thm:ad-grads-are-clarke} with
    \(\beta_{\sigma, \xi}\) as defined there as well as \( (\xi^{[j]}_\zeta)_i = \zeta_i\) for all \(i\in \alpha^{[j]}\), it
    holds
	\begin{align}
		\sum_{\tau \in \set{-1,1}^s} \gamma_{\tau, \zeta} \grad \varphi^{[j]}_{\sigma^{[j]}_\tau} (\fixed{x})
		&= \sum_{\sigma^{[j]} \in \succmore{\fixed{\sigma}^{[j]}} }	\sum_{\substack{\tau \in \set{-1,1}^s\\\sigma_i^{[j]} = \tau_i \text{ for }   i\in \alpha^{[j]}    }} \prod_{i=1}^s \frac{\tau_i \zeta_i  + 1}{2} \grad \varphi^{[j]}_{\sigma^{[j]}} (\fixed{x}) \\
		&= \sum_{\sigma^{[j]} \in \succmore{\fixed{\sigma}^{[j]}} } \prod_{i\in \alpha^{[j]}} \frac{\sigma_i^{[j]} (\xi^{[j]}_\zeta)_i  + 1}{2}  \grad \varphi^{[j]}_{\sigma^{[j]}} (\fixed{x}) \!\!\!\!\!\! \sum_{\substack{\tau \in \set{-1,1}^s\\\sigma_i^{[j]} = \tau_i \text{ for }  i\in \alpha^{[j]}   }} \prod_{i\notin \alpha^{[j]}} \frac{\tau_i \zeta_i  + 1}{2} \\
		&= \sum_{\sigma^{[j]} \in \succmore{\fixed{\sigma}^{[j]}} } \beta_{\sigma^{[j]},\xi^{[j]}_\zeta} \grad \varphi^{[j]}_{\sigma^{[j]}} (\fixed{x}) = \grad \varphi^{[j]}_{\xi^{[j]}_\zeta} (\fixed{x}).
	\end{align}
	Averaging this identity over \(j\in \mathcal{J}\), yields
	\begin{align}
        \frac{1}{\abs{\mathcal{J}}}\sum_{j\in \mathcal{J}} \grad \varphi^{[j]}_{\xi^{[j]}_\zeta} (\fixed{x})
        \quad&=
        \sum_{\tau \in \set{-1,1}^s} \gamma_{\tau, \zeta} \frac{1}{\abs{\mathcal{J}}} \sum_{j\in \mathcal{J}} \grad \varphi^{[j]}_{\sigma^{[j]}_\tau} (\fixed{x}) \in \clarke{\paren[\Big]{	\frac{1}{\abs{\mathcal{J}}} \sum_{j\in \mathcal{J}} \varphi^{[j]} }}(\fixed{x}).
        \qedhere \end{align}
\end{proof}
In the above theorem, \(\tau \in \set{-1,1}^s\) and \(\zeta \in [-1,1]^s\) can
be seen as an a priori defined choice for \(\partial \abs{\cdot}(0)\) in the
case the respective absolute value is evaluated at zero.
Hence, the same a priori choice should be made for all samples.
In practice, this is usually \(\tau = (1, \ldots ,1)\) or \(\zeta = (0, \ldots
,0)\), see Table \ref{tab:ad-tools}.
It remains to apply Theorem~\ref{thm:batch_nns} to feed forward neural networks.

\begin{corollary}\label{cor:ad_works_nns}
    AD tools compute Clark generalized gradients in the training of feed forward
    neural networks by stochastic batch gradient descent.
\end{corollary}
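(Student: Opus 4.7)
The plan is to show that the non-trivial hypothesis of Theorem~\ref{thm:batch_nns}, namely the existence of a single right-inverse common to all the matrices \(Z^{[j]}\), is automatically satisfied by the abs-normal form of a feed forward ReLU network; once this is established, the conclusion of that theorem translates directly into the corollary.

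The key step is the observation that in the evaluation procedure \((f^{[j]}, c^{[j]})\) spelled out in the preceding paragraphs, the matrix \(Z^{[j]}\) displayed in~\eqref{eq:ZML_for_nn} contains an identity block \(I_s\) that does not depend on the sample \(j\). Indeed, each scalar bias \(b^{(t)}_i\) enters the \(i\)-th component of the row \(z^{(t)} = W^{(t)} u^{(t-1)} + b^{(t)}\) of \(c^{[j]}\) with coefficient one and appears nowhere else in \(c^{[j]}\), so the columns of \(Z^{[j]}\) corresponding to the biases \((b^{(1)}, \ldots, b^{(T)})\) form an identity block whose position does not depend on the data \(u^{[j]}\). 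Picking out precisely these columns defines a sample-independent matrix \(Z^\pinv\) satisfying \(Z^{[j]} Z^\pinv = I_s\) for every \(j \in \mathcal{J}\), so the hypothesis of Theorem~\ref{thm:batch_nns} holds at every parameter configuration \((\mathbf{W}, \mathbf{b})\).

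It then remains to read off the conclusion. As recorded in Table~\ref{tab:ad-tools}, every practical AD tool makes a deterministic, sample-independent choice of \(\partial \abs{\cdot}(0) \in \rlclosed{-1, 1}\), so the surrogate gradient that it returns for sample \(j\) is exactly \(\grad \varphi^{[j]}_{\xi^{[j]}_\zeta}(\mathbf{W}, \mathbf{b})\) for one fixed vector \(\zeta \in \rlclosed{-1, 1}^s\) that is shared across the batch. The second part of Theorem~\ref{thm:batch_nns} then places the resulting batch average in the Clarke subdifferential of \(\tfrac{1}{\abs{\mathcal{J}}} \sum_{j \in \mathcal{J}} \varphi^{[j]}\). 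There is essentially no obstacle here beyond bookkeeping: the entire corollary rests on recognizing the sample-independent identity block inside every \(Z^{[j]}\), which is evident from the explicit form of \(c^{[j]}\).
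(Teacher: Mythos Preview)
Your proposal is correct and follows essentially the same route as the paper: you identify the sample-independent identity block in \(Z^{[j]}\) coming from the bias variables (the paper writes the resulting common right-inverse explicitly as \(Z^\pinv = \begin{bmatrix} 0 \\ I_s \end{bmatrix}\)), and then invoke Theorem~\ref{thm:batch_nns} together with the fact that practical AD tools make a fixed, sample-independent choice \(\zeta \in [-1,1]^s\). Your additional sentence explaining \emph{why} the bias columns produce that identity block is a welcome clarification but does not change the argument.
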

\begin{proof}
    The abs-normal form of \(\varphi^{[j]}\) for training of neural networks has a
    pseudo-inverse for \(Z^{[j]}\) that is independent of \(j\) because by the
    computations in~\eqref{eq:ZML_for_nn},
	\begin{equation}
		Z^\pinv \coloneqq \begin{bmatrix}
			0 \\ I_s
		\end{bmatrix}
	\end{equation}
	is a suitable choice.
    Thereby, Theorem~\ref{thm:batch_nns} holds in this case.
    If in the backward propagation, a fixed choice \(\zeta \in [-1,1]^s\) is made
    for all samples, algorithmic differentiation on the batch
    problem~\eqref{eq:batch_subgrad} computes
	\begin{equation}
	    \frac{1}{\abs{\mathcal{J}}}\sum_{j\in \mathcal{J}} \grad \varphi^{[j]}_{\xi^{[j]}_\zeta} (\fixed{x}) \text{ where }
	    (\xi^{[j]})_k \coloneqq
	    \begin{cases}
	    	\zeta_k &\text{ if } \fixed{\sigma}_k^{[j]}  = 0 \\
	    	\fixed{\sigma}_k^{[j]} &\text{ else,}
        \end{cases}
    \end{equation}
    which is a Clark generalized gradient by Theorem~\ref{thm:batch_nns}.
\end{proof}
The existence of a shared pseudo-inverse relied on the identity block in
\(Z^{[j]}\).
This identity always arises if there is an independent bias added in every
ReLU activation function.
Therefore, Corollary~\ref{cor:ad_works_nns} applies for every neuron based
ReLU network with biases.

\section{Concluding remarks and further research}
It was shown that under the linear independence kink qualification for
abs-smooth problems generalized derivatives can be computed by naive
algorithmic differentiation.
In particular choosing any generalized gradient of the elementary functions
yields a generalized gradient of the composition.
This was shown for limiting gradients in Corollary~\ref{cor:bouligand} and for
Clarke generalized gradients in Theorem~\ref{thm:ad-grads-are-clarke}.
It can therefore be justified to use algorithmic differentiation as a
subgradient oracle in non-smooth optimization routines, if the objective
satisfies LIKQ at all iterates, which is a generic assumption.
Although LIKQ can not a priori be verified for deep neural networks,
algorithmic differentiation computes generalized gradients.
This is established in Theorem~\ref{thm:batch_nns} and
Corollary~\ref{cor:ad_works_nns} using similar techniques as in
Lemma~\ref{lem:all-essential}.

\printbibliography 

\end{document}
\typeout{get arXiv to do 4 passes: Label(s) may have changed. Rerun}